\def\a{\alpha}
\def\l{\lambda}
\def\g{\gamma}
\def\0{\bar{0}}
\def\1{\bar{1}}
\def\g{\mathfrak{g}}
\def\g{{\mathfrak g}}
\newtheorem{lemma}{Lemma}[section]
\newtheorem{theorem}[lemma]{Theorem}
\newtheorem{proposition}[lemma]{Proposition}
\title{ On the simplicity of  induced modules for  reductive Lie algebras}
\author{
Chaowen Zhang\\ Department of
Mathematics,\\ China University
 of Mining and Technology,\\ Xuzhou, 221116, Jiang Su, P. R. China}
\date{ }
\begin{document}
\maketitle

\section{Introduction}
  Let $G$ be a reductive algebraic group defined over an algebraically closed field $\mathbf F$ of positive characteristic $p$, and let $\g$ be the Lie algebra of $G$.  In \cite[5.1]{fp2}, Friedlander and Parshall  raised an open question (stated below) about the simplicity of  certain induced $\g$-modules with $p$-character $\chi\in \g^*$. The question has been answered (by V. Kac) when $\g$ is of type $A_2$ (see \cite[Example 3.6]{fp2}), and also when $\g$ is of type $A_3$ (see \cite{lsy}).  In this paper, we study the question under certain restriction on $\chi$.  \par
   Following \cite[6.3]{j} we make the following hypotheses:\par
   (H1) The derived group $DG$ of $G$ is simply connected;\par
   (H2) The prime $p$ is good for $\g$;\par
   (H3) There exists a $G$-invariant non-degenerate bilinear form on $\g$.\par
   Let $T$ be a maximal torus of $G$, let $\mathfrak h=\mbox{Lie}(T)$, and let $\Phi$ be the root system of $G$.  Let $\Pi=\{\a_1, \dots,\a_l\}$ be a base of $\Phi$ and let  $\Phi^+$ be the set of positive    roots relative to $\Pi$. For each $\a\in\Phi^+$ let $\g_{\a}$ denote the corresponding root space of $\g$.   According to \cite[6.1]{j} we have  $\g=\mathfrak n^-+\mathfrak h+\mathfrak n^+$, where $$\mathfrak n^+=\sum_{\a\in\Phi^+}\g_{\a}, \quad \mathfrak n^-=\sum_{\a\in\Phi^+}\g_{-\a}.$$    Fix a  proper subset $I$ of $\Pi$ and put $\Phi_I=\mathbb ZI\cap\Phi$ and $ \Phi^+_I=\Phi_I\cap \Phi^+.$  Define   $\tilde\g_I=\mathfrak h+\sum_{\a\in\Phi_{I}}\g_{\a}$, as well as
$$\mathfrak u=\sum_{\a\in \Phi^+\setminus \Phi^+_I}\g_{\a},\quad \mathfrak u'=\sum_{\a\in \Phi^+\setminus \Phi^+_I}\g_{-\a}.$$ Then $\mathfrak p_I=\tilde\g_I+\mathfrak u$ and $\mathfrak p'_I=\tilde\g_I+\mathfrak u'$ are parabolic subalgebras of $\g$, each with Levi factor $\tilde\g_I$ \cite[10.6]{j}.
  Throughout the paper we assume  that $\chi (\mathfrak n^+)=0$. This is done without loss of generality due to \cite[Lemma 6.6]{j}. Our method requires the additional assumption that $\chi (\mathfrak u')=0$, which we make throughout,\par For any restricted Lie subalgebra $L$ of $\g$,  we denote by $u_{\chi}(L)$ the {\it $\chi$-reduced enveloping algebra} of $L$, where we continue to use $\chi$ for the restriction of $\chi$ to $L$ (\cite[5.3]{sf}).  If $\chi =0$,  $u_{\chi}(L)$ is referred to as the {\it reduced enveloping algebra} of $L$, and denoted more simply  by $u(L)$. Let $L^{\chi}_I(\lambda)$ be a simple $u_{\chi}(\mathfrak p_I)$-module generated by a maximal vector $v_{\lambda}$ of weight $\lambda\in \mathfrak h^*$. Define the induced $u_{\chi}(\g)$-module $$Z^{\chi}_I(\lambda)=u_{\chi}(\g)\otimes _{u_{\chi}(\mathfrak p_I)}L^{\chi}_I(\lambda).$$ \par
     The main result of the present paper is Th. 3.7, which  gives a  necessary and sufficient condition for   $Z^{\chi}_I(\lambda)$ to be simple;  we show that $Z^{\chi}_I(\lambda)$ is simple if and only if $\lambda$ is not a zero of a certain polynomial $R^I_{\g}(\lambda)$. Under our assumption on $\chi$,  Theorem 3.7 answers the open question \cite[5.1]{fp2}.  \par  The paper is organized as follows. In Sec. 2,  we introduce the concept {\it extended $\a$-string} for any simple root $\a$ in an irreducible root system; we also investigate extended $\a$-strings in all different irreducible root systems (see Prop. 2.1).  Using  results from Sec. 2, we prove the main theorem in Sec. 3, which says that the simplicity of the induced module $Z^{\chi}_I(\lambda)$ is completely determined by a polynomial $R^I_{\g}(\lambda)$. In Sec. 4 we establish the explicit expression of the polynomial $R^I_{\g}(\lambda)$; we also show that Th. 4.4 recovers the Kac-Weisfeiler theorem (\cite{fp1,kw}).\par

\section{$\a$-strings in a root system}
Let $\Pi$ and $\Phi^+$ be as above. Without loss of generality we assume $\Phi$ is irreducible. For each  $\a\in \Pi$ and $\beta\in \Phi^+\setminus \a$,
we denote the $\a$-string through $\beta$ by $\mathcal S_{\a}\beta$. Define an order on the set $\mathcal S_{\a}\beta$ by $$\beta+q\a\prec \beta+(q-1)\a\prec \cdots \prec \beta\prec \beta-\a\prec \cdots\prec \beta-r\a,$$
where $q$ (resp. $r$) is the largest non-negative integer such that $\beta+q\a$ (resp. $\beta-r\a$) in $\Phi^+$. By \cite[9.4]{hu}, the length of the string is at most 4. We say that the $\a$-string through $\beta$ is {\it isolated} if $r=q=0$. Note that if $\mathcal S_{\a}\beta$ is non-isolated, we have $\mathcal S_{\a}\beta=\mathcal S_{\a}\beta'$ for any $\beta'\in \mathcal S_{\a}\beta$. To avoid repetitions, we assume in the following  that $\beta+\a\notin \Phi^+$.\par
We call the set $((\mathbb N\setminus 0)\beta+\mathbb Z\a)\cap \Phi^+$ the {\it extended $\a$-string through $\beta$}, denoted $\tilde{\mathcal S}_{\a}\beta$. Define an order on the extended $\a$-string by $$l\beta+m\a\prec l'\beta+m'\a \quad\text{if $l>l'$ or $l=l'$ but $m>m'$}.$$

\begin{proposition}Assume that $\Phi$ is irreducible and  not of type $G_2$. Let $\a\in\Pi$, and let $\beta\in \Phi^+\setminus \a$ with $\mathcal S_{\a}\beta$ non-isolated. Then we have either $\mathcal S_{\a}\beta=\{\beta,\beta-a\}$ or $\mathcal S_{\a}\beta=\{\beta,\beta-\a,\beta-2\a\}$,  and either $\tilde{\mathcal S}_{\a}\beta= \mathcal S_{\a}\beta$ or $\tilde{\mathcal S}_{\a}\beta=\{2\beta-\a\}\cup\mathcal S_{\a}\beta$.\par
\end{proposition}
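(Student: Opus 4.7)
The plan is to reduce everything to the rank-$2$ sub-root-system $\Psi := \Phi \cap (\mathbb R\alpha + \mathbb R\beta)$. Since $\alpha$ is simple in $\Phi$ it remains simple in $\Psi$; write $\gamma$ for the other simple root of $\Psi$ and $\beta = a\alpha + b\gamma$ with $b \ge 1$. I would first verify that $\Psi$ cannot be of type $G_2$: when $\Phi$ is of type $A_n$, $B_n$, $C_n$, $D_n$, $F_4$, or $E_n$, every pair of roots $\delta,\varepsilon$ satisfies $\langle\delta,\varepsilon^\vee\rangle\langle\varepsilon,\delta^\vee\rangle \in \{0,1,2,4\}$, never $3$, so no two roots of $\Phi$ generate a $G_2$-subsystem. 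Hence $\Psi$ is of type $A_1\times A_1$, $A_2$, or $B_2$.

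For the first assertion, I use the strengthening of \cite[9.4]{hu} that in a non-$G_2$ root system every $\alpha$-string has length at most $3$, so $r + q \le 2$. The standing assumption $\beta+\alpha \notin \Phi^+$ gives $q=0$, and non-isolatedness gives $r \ge 1$, leaving $r \in \{1,2\}$ and producing the two claimed forms of $\mathcal{S}_\alpha\beta$.

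For the second assertion, any $l\beta + m\alpha \in \tilde{\mathcal S}_\alpha\beta$ lies in $\Psi$ and equals $(la+m)\alpha + lb\gamma$; since no root of a rank-$2$ non-$G_2$ system has any simple-root coefficient exceeding $2$, this forces $lb \le 2$. The case $l=1$ recovers $\mathcal{S}_\alpha\beta$, so a genuinely new element requires $l=2$, hence $b=1$, and in turn $\Psi = B_2$ with $\gamma$ short (the only configuration in which a root has $\gamma$-coefficient equal to $2$, namely $\alpha+2\gamma$); matching coefficients identifies this root as $2\beta - \alpha$. It then remains to run through the cases. Using non-isolatedness and $\beta+\alpha \notin \Phi^+$, the admissible pairs $(\Psi, \beta)$ are precisely (i) $\Psi = A_2$ with $\beta = \alpha+\gamma$; (ii) $\Psi = B_2$ with $\alpha$ long, $\gamma$ short and $\beta = \alpha+\gamma$; and (iii) $\Psi = B_2$ with $\alpha$ short, $\gamma$ long and $\beta = 2\alpha+\gamma$. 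In (i) and (iii), $2\beta-\alpha \notin \Phi$; in (ii), $2\beta-\alpha = \alpha + 2\gamma \in \Phi$. I expect the main obstacle to be precisely this last bookkeeping, where the long-versus-short labelling of $\alpha$ and $\gamma$ in $B_2$ simultaneously controls the length of $\mathcal{S}_\alpha\beta$ and whether $\tilde{\mathcal S}_\alpha\beta$ strictly contains it.
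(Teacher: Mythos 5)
Your proof is correct and takes essentially the same route as the paper's: both reduce to the rank-$2$ subsystem of $\Phi$ spanned by $\alpha$ and $\beta$, observe that (under the hypotheses) it must be of type $A_2$ or $B_2$, and then case out. Your explicit justification that no $G_2$ subsystem occurs (via the Cartan-integer product taking values in $\{0,1,2,4\}$) and the coefficient bound $lb\le 2$ forcing $l\le 2$ simply make precise steps the paper states without elaboration.
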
\begin{proof} Set $$\Phi_{\a,\beta}=(\mathbb Z\a+\mathbb Z\beta)\cap \Phi,\quad \Phi_{\a,\beta}^+=(\mathbb Z\a+\mathbb Z\beta)\cap \Phi^+,\quad \Phi_{\a,\beta}^-=(\mathbb Z\a+\mathbb Z\beta)\cap \Phi^-.$$ Then clearly $\Phi_{\a,\beta}=\Phi^+_{\a,\beta}\cup\Phi_{\a,\beta}^-$ is
a subsystem of rank 2. In addition, $\a\in\Phi^+_{\a,\beta}$ is also a simple root. By assumption, the subsystem $\Phi_{\a,\beta}$ can only be  of  type  $ A_2$ or $ B_2$.\par If $\Phi_{\a,\beta}$ is of  type $A_2$, then we have $\Phi_{\a,\beta}^+=\{\a, \beta, \beta-\a\}$, so that $$\tilde{\mathcal S}_{\a}\beta=\mathcal S_{\a}\beta=\{\beta,\beta-\a\}.$$ If $\Phi_{\a,\beta}$ is of  type $B_2$, then we have $$\Phi^+_{\a,\beta}=\{\a_1,\a_2,\a_1+\a_2,\a_1+2\a_2\}$$ with either $\a=\a_1$ or $\a=\a_2$. Since $\mathcal S_{\a}\beta$ is non-isolated, in the case  $\a=\a_1$, we must have $\beta=\a_1+\a_2$. It follows that $$\mathcal S_{\a}\beta=\{\beta,\beta-\a\},\quad  \tilde{\mathcal S}_{\a}\beta=\{2\beta-\a\}\cup\mathcal S_{\a}\beta.$$ In case $\a=\a_2$, we must have $\beta=\a_1+2\a_2$, so that $$\tilde{\mathcal S}_{\a}\beta=\mathcal S_{\a}\beta=\{\beta,\beta-\a, \beta-2\a\}.$$
\end{proof}
We now discuss the case $G_2$.  According to \cite[Ch. 6, 4.13]{bo}, we have $$\Phi^+=\{\a_1,\a_2, \a_1+\a_2, 2\a_1+\a_2,3\a_1+\a_2, 3\a_1+2\a_2\}, \quad \Pi=\{\a_1,\a_2\}.$$ Case 1. $\a=\a_1$.
For  $\beta=3\a_1+2\a_2$, the $\a$-string $\mathcal S_{\a}\beta$ is isolated; for $\beta=3\a_1+\a_2$, we have $$\mathcal S_{\a}\beta=\{3\a_1+\a_2, 2\a_1+\a_2, \a_1+\a_2,\a_2\}\quad\mathbin{\mathrm{and}}\quad
\tilde{\mathcal S}_{\a}\beta=\Phi^+\setminus \a.$$
Case 2. $\a=\a_2$. For $\beta_1=3\a_1+2\a_2$, we have $$\tilde {\mathcal S}_{\a}\beta_1=\mathcal S_{\a}\beta_1=\{3\a_1+2\a_2, 3\a_1+\a_2\};$$ for $\beta_2=2\a_1+\a_2$, the $\a$-string through it is isolated;
for $\beta_3=\a_1+\a_2$, we have $\mathcal S_{\a}\beta_3=\{\a_1+\a_2,\a_1\}$  and    $$\tilde{\mathcal S}_{\a}\beta_3=\{3\a_1+2\a_2, 3\a_1+\a_2, 2\a_1+\a_2,\a_1+\a_2,\a_1\}=\Phi^+\setminus \a.$$ Note that $\tilde{\mathcal S}_{\a}\beta_1\subseteq \tilde{\mathcal S}_{\a}\beta_3$. \par
Let $\Phi$ be irreducible and let $\a\in \Pi$.  If $\beta_1, \beta_2\in \Phi^+\setminus\a$ with $\mathcal S_{\a}\beta_1$ and $\mathcal S_{\a}\beta_2$ both non-isolated,  we have from Prop. 2.1 that  $\tilde{\mathcal S}_{\a}\beta_1=\tilde{\mathcal S}_{\a}\beta_2$ or $\tilde{\mathcal S}_{\a}\beta_1\cap \tilde{\mathcal S}_{\a}\beta_2=\phi$ if $\Phi$ is not of type $G_2$, but we can have $\tilde{\mathcal S}_{\a}\beta_1\subsetneqq\tilde{\mathcal S}_{\a}\beta_2$ in the case  $\Phi$ is of type $G_2$.
\section{Simplicity criterion}In this section, we keep the assumptions as in the introduction.  Let $$\{e_{\a}, h_{\beta}|\a\in\Phi, \beta\in\Pi\}$$ be a  Chevalley basis for $\g'=\text{Lie}(G')$ such that $$[e_{\a}, e_{\beta}]=\pm (r+1)e_{\a+\beta},\quad \mathbin{\mathrm{if}}\quad\a,\ \beta, \ \a+\beta\in \Phi^+,$$ where $r$ is the greatest integer for which $\beta-r\a\in\Phi$ (see \cite[Th. 25.2]{hu}). From the proof of Prop. 2.1, we see that our assumption on $p$ ensures that $(r+1)\neq 0$.\par  For $\a\in \Phi^+$ put $f_{\a}=-e_{\a}$. Then we have $\g_{\a}=\mathbf Fe_{\a}$ and $\g_{-\a}=\mathbf Ff_{\a}$ for every $\a\in\Phi^+$.   For each fixed simple root $\a$, let \ $N_{\a}=\sum_{\beta\in \Phi^+\setminus\a}\g_{-\a}$. \ Then $N_{\a}$ is a restricted subalgebra of $\g$. \par

Let $u(N_{\a})$ be the restricted enveloping algebra of $N_{\a}$. For each $\beta\in\Phi^+\setminus\a$ with $\mathcal S_{\a}\beta$ non-isolated, we define $\tilde f^{\beta}\in u(N_{\a})$ to be the product of $f_{\gamma}^{p-1}$, $\gamma\in \tilde{\mathcal S}_{\a}\beta$  in the order given in Sec. 2. For example, if \ $\tilde{\mathcal S}_{\a}\beta=\mathcal S_{\a}\beta=\{\beta,\beta-\a\}$, then  $$\tilde f^{\beta}=f_{\beta}^{p-1}f_{\beta-\a}^{p-1}\in u(N_{\a}).$$ \par

 Remark: Let $\a,\beta\in\Phi^+$ such that $\a+\beta\in\Phi^+$ (resp. $\beta-\a\in\Phi^+$). Then we have $$[e_{\a}, e_{\beta}]=ce_{\a+\beta}, \quad [f_{\a}, f_{\beta}]=-cf_{\a+\beta} \ (\text{resp. $[e_{\a}, f_{\beta}]=cf_{\beta-\a}$})$$ for some $c\in\mathbf F\setminus 0$. For  brevity, we omit the scalar $c$. This does not affect any of the proofs in this section.
\begin{lemma} Let $\a\in\Pi$. For each $\beta\in\Phi^+\setminus \a$ with $\mathcal S_{\a}\beta$ non-isolated, we have $$ [e_{\a}, \tilde{f}^{\beta}]=0.$$
\end{lemma}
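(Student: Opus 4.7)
The plan is to expand $[e_\alpha,\tilde f^\beta]$ using the Leibniz rule (since $\mathrm{ad}(e_\alpha)$ is a derivation on $u(N_\alpha)$) and to show that every resulting term contains a factor $f_\delta^p = 0$ for some $\delta\in\tilde{\mathcal S}_\alpha\beta$. The combinatorial input comes from Proposition~2.1 and the explicit $G_2$ enumeration preceding the lemma.

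First I would record the key structural fact: for $\gamma\in\tilde{\mathcal S}_\alpha\beta$, the bracket $[e_\alpha,f_\gamma]$ equals (up to the suppressed scalar) $f_{\gamma-\alpha}$ when $\gamma-\alpha\in\Phi^+$ and is zero otherwise. Inspection of Proposition~2.1 and of the $G_2$ cases shows that whenever $\gamma-\alpha\in\Phi^+$ for some $\gamma\in\tilde{\mathcal S}_\alpha\beta$, the root $\gamma-\alpha$ itself lies in $\tilde{\mathcal S}_\alpha\beta$ and is the element immediately following $\gamma$ in the $\prec$-order. Writing $\tilde f^\beta=A_\gamma\cdot f_\gamma^{p-1}\cdot B_\gamma$ where $A_\gamma,B_\gamma$ are the prefix and suffix of $\tilde f^\beta$ relative to $f_\gamma^{p-1}$, the Leibniz rule gives
$$[e_\alpha,\tilde f^\beta]=\sum_\gamma A_\gamma\cdot [e_\alpha,f_\gamma^{p-1}]\cdot B_\gamma,\qquad [e_\alpha,f_\gamma^{p-1}]=\sum_{i=0}^{p-2}f_\gamma^i\, f_{\gamma-\alpha}\, f_\gamma^{p-2-i}$$
for the contributing $\gamma$. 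Commuting the newly produced $f_{\gamma-\alpha}$ rightward past the remaining copies of $f_\gamma$ and the first few factors of $B_\gamma$ makes it adjacent to the factor $f_{\gamma-\alpha}^{p-1}$ in $B_\gamma$, producing $f_{\gamma-\alpha}^p=0$.

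The main obstacle is the non-commutativity that arises in types $B_2$ and $G_2$, where some pairs $\gamma,\gamma'\in\tilde{\mathcal S}_\alpha\beta$ satisfy $\gamma+\gamma'\in\Phi$ and hence $[f_\gamma,f_{\gamma'}]=\pm f_{\gamma+\gamma'}$. The crucial observation, again supplied by Proposition~2.1 and the $G_2$ enumeration, is that such a $\gamma+\gamma'$ has strictly larger $\beta$-coefficient, and so it again lies in $\tilde{\mathcal S}_\alpha\beta$ and sits earlier in the $\prec$-order. Thus every auxiliary term generated while commuting $f_{\gamma-\alpha}$ rightward carries a factor $f_{\gamma+\gamma'}$, which itself commutes with all later factors (any further bracket would demand a root of even larger $\beta$-coefficient, which is excluded from $\Phi$) and absorbs into the existing $f_{\gamma+\gamma'}^{p-1}$ in $A_\gamma$ to give $f_{\gamma+\gamma'}^p=0$. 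For the length-$4$ $G_2$ string $\tilde{\mathcal S}_{\alpha_1}(3\alpha_1+\alpha_2)=\Phi^+\setminus\alpha_1$ and the other $G_2$ strings, the same bookkeeping is needed and I would verify it by direct calculation using the explicit data listed before the lemma.
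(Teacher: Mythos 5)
Your proposal is correct and follows essentially the same route as the paper: both rely on the explicit description of $\tilde{\mathcal S}_\alpha\beta$ and its $\prec$-ordering from Proposition~2.1 and the $G_2$ enumeration, expand $[e_\alpha,\tilde f^\beta]$ by the derivation property of $\mathrm{ad}(e_\alpha)$ on $u(N_\alpha)$, and kill every term by producing a $p$-th power $f_\delta^p=0$, with the $B_2$ and $G_2$ corrections absorbed into the highest-root factor $f_{\gamma+\gamma'}^{p-1}$ exactly as the paper does in its case-by-case computation. The paper simply presents these same cancellations case by case rather than under your unifying bookkeeping.
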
 \begin{proof} We may assume that $\Phi$ is irreducible. Suppose that $\Phi$ is not of type $G_2$. By Prop. 2.1, we need only consider the following cases. \par
 Case 1. $\tilde{\mathcal S}_{\a}\beta=\mathcal S_{\a}\beta=\{\beta, \beta-\a\}$. Then we have $\tilde f^{\beta}=f_{\beta}^{p-1}f_{\beta-\a}^{p-1}$. Since $$[e_{\a}, f_{\beta}]=f_{\beta-\a},\quad [e_{\a},f_{\beta-\a}]=0, \quad [f_{\beta-\a},f_{\beta}]=0,$$ and  $f_{\beta-\a}^p=0$ in $u(N_{\a})$, the lemma follows.\par Case 2.  $\tilde{\mathcal S}_{\a}\beta=\{2\beta-\a,\beta,\beta-\a\}$, $\mathcal S_{\a}\beta=\{\beta, \beta-\a\}$.  In this case we have $$\tilde f^{\beta}=f_{2\beta-\a}^{p-1}f_{\beta}^{p-1}f_{\beta-\a}^{p-1}.$$ Since $$[e_{\a}, f_{2\beta-\a}]=0,\quad [e_{\a},f_{\beta}]=f_{\beta-\a}, \quad [f_{\beta-\a},f_{\beta}]=f_{2\beta-\a},$$ and $[f_{\beta},f_{2\beta-\a}]=0$, we get $[e_{\a}, \tilde f^{\beta}]=0$.\par
 Case 3. $\tilde{\mathcal S}_{\a}\beta=\mathcal S_{\a}\beta=\{\beta, \beta-\a, \beta-2\a\}$. In this case  we have $$\tilde f^{\beta}=f_{\beta}^{p-1}f_{\beta-\a}^{p-1}f_{\beta-2\a}^{p-1}.$$ Since $$[e_{\a}, f_{\beta}]=f_{\beta-\a}, \quad [e_{\a}, f_{\beta-\a}]=f_{\beta-2\a}, \quad [e_{\a}, f_{\beta-2\a}]=0, \quad [f_{\beta-\a}, f_{\beta}]=0,$$ and
$[f_{\beta-2\a}, f_{\beta-\a}]=0$, we have  $[e_{\a}, \tilde f^{\beta}]=0$.\par
Assume $\Phi$ is of type $G_2$. In the case  $\a=\a_1$, $\beta=3\a_1+\a_2$,  we have  from Sec. 2 that $$\tilde f^{\beta}=f_{3\a_1+2\a_2}^{p-1}f_{3\a_1+\a_2}^{p-1}f_{2\a_1+\a_2}^{p-1}f_{\a_1+\a_2}^{p-1}f_{\a_2}^{p-1}.$$ Since $[e_{\a_1}, f_{3\a_1+2\a_2}]=0$, we have $$e_{\a_1}\tilde f^{\beta}=f_{3\a_1+2\a_2}^{p-1}e_{\a_1}f_{3\a_1+\a_2}^{p-1}f_{2\a_1+\a_2}^{p-1}f_{\a_1+\a_2}^{p-1}f_{\a_2}^{p-1}$$
$$(\text{using $[e_{\a_1}, f_{3\a_1+\a_2}]=f_{2\a_1+\a_2}$ and $[f_{2\a_1+\a_2}, f_{3\a_1+\a_2}]=0$})$$
$$=f_{3\a_1+2\a_2}^{p-1}f_{3\a_1+\a_2}^{p-1}e_{\a_1}f_{2\a_1+\a_2}^{p-1}f_{\a_1+\a_2}^{p-1}f_{\a_2}^{p-1}$$$$(\text{using $[e_{\a_1}, f_{2\a_1+\a_2}]=f_{\a_1+\a_2}$, $[f_{\a_1+\a_2}, f_{2\a_1+\a_2}]=f_{3\a_1+2\a_2}$,}$$$$\text{ and the fact that $f_{3\a_1+2\a_2}$ commutes with all $f_{\beta}$, $\beta\in \Phi^+$})$$$$=f_{3\a_1+2\a_2}^{p-1}f_{3\a_1+\a_2}^{p-1}f_{2\a_1+\a_2}^{p-1}e_{\a_1}f_{\a_1+\a_2}^{p-1}f_{\a_2}^{p-1}$$
$$(\text{using $[e_{\a_1}, f_{\a_1+\a_2}]=f_{\a_2}$ and $[f_{\a_1+\a_2}, f_{\a_2}]=0$})$$$$=\tilde f^{\beta}e_{\a_1},$$  so that $[e_{\a}, \tilde f^{\beta}]=0.$\par Let $\a=\a_2$. For $\beta_1=3\a_1+2\a_2$, we have from Sec. 2 that $\tilde f^{\beta_1}= f_{\beta_1}^{p-1}f_{\beta_1-\a}^{p-1}$, and hence $[e_{\a}, \tilde f^{\beta_1}]=0$ as above.  For $\beta_3=\a_1+\a_2$, we have  $$\tilde f^{\beta_3}=f_{3\a_1+2\a_2}^{p-1}f_{3\a_1+\a_2}^{p-1}f_{2\a_1+\a_2}^{p-1}f_{\a_1+\a_2}^{p-1}f_{\a_1}^{p-1}.$$ Since \ $[e_{\a_2}, f_{3\a_1+\a_2}]=0$ and $[e_{\a_2}, f_{2\a_1+\a_2}]=0$, \ it is easy to see that \ $[e_{\a}, \tilde f^{\beta_3}]=0$.

\end{proof}
Recall from the introduction  the notation $\mathfrak p_I, \mathfrak p'_I, \mathfrak u, \mathfrak u'$, and $\tilde\g_I$.
Each simple $u_{\chi}(\mathfrak p_I)$-module is generated by a maximal vector $v_{\lambda}$ of weight $\lambda\in \mathfrak h^*$, denoted $L^{\chi}_{I}(\lambda)$. Define the induced $u_{\chi}(\g)$-module $$Z_I^{\chi}(\lambda)=u_{\chi}(\g)\otimes_{u_{\chi}(\mathfrak p_I)}L^{\chi}_{I}(\lambda).$$ By the PBW theorem for the $\chi$-reduced enveloping algebra $u_{\chi}(\g)$ (\cite[Th. 5.3.1]{sf}), we have $$Z_I^{\chi}(\lambda)\cong u_{\chi}(\mathfrak u')\otimes_{\mathbf F}L^{\chi}_{I}(\lambda)$$ as $u_{\chi}(\mathfrak u')$-modules. By the assumption on $\chi$, we have  $u_{\chi}(\mathfrak u')=u(\mathfrak u')$.\par
    Let $\Phi^+\setminus \Phi^+_I=\{ \beta_1,\beta_2,\dots, \beta_k\}$, and let $v_1,\dots, v_n$ be a basis of $L^{\chi}_I(\lambda)$.  Then $Z^{\chi}_I(\lambda)$ has a basis $$f_{\beta_1}^{l_1}f_{\beta_2}^{l_2}\cdots f_{\beta_k}^{l_k}\otimes v_j,\quad 0\leq l_i\leq p-1,\ i=1,\dots,k,\ j=1,\dots, n.$$
  Using (H3), we can show that $\mathfrak u$ is the nilradical of the parabolic subalgebra $\mathfrak p_I$.
By \cite[Coro. 3.8]{sf},  $L^{\chi}_I(\lambda)$ is  annihilated by $\mathfrak u$, and hence is a simple $u_{\chi}(\tilde\g_I)$-module.

\begin{lemma} For any fixed ordering of $\Phi^+\setminus \Phi^+_I$: $\beta_{i_1},\dots,\beta_{i_k}$, there is a nonzero scalar $c\in\mathbf F$ such that $f_{\beta_{i_1}}^{p-1}\cdots f_{\beta_{i_k}}^{p-1}=cf_{\beta_1}^{p-1}\cdots f_{\beta_k}^{p-1}$ in $u(\mathfrak u')$.
\end{lemma}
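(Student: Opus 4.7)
The plan is to exploit the adjoint $T$-action on $u(\mathfrak u')$, under which both sides of the claimed identity are weight vectors for the same weight. Since each root space $\g_{-\b}$ is $T$-stable, the adjoint action of $T$ on $\mathfrak u'$ extends to $u(\mathfrak u')$ by algebra automorphisms, and each $f_\b$ is a $T$-weight vector of weight $-\b\in X(T)$. Consequently any monomial in the $f_{\b_j}$'s is again a weight vector, with weight obtained by summing the contributions of its factors. In particular, both $f_{\b_{i_1}}^{p-1}\cdots f_{\b_{i_k}}^{p-1}$ and $f_{\b_1}^{p-1}\cdots f_{\b_k}^{p-1}$ lie in the $T$-weight space of weight $\mu:=-(p-1)\sum_{j=1}^{k}\b_j$.

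Next I would expand the left-hand side in the PBW basis $\{f_{\b_1}^{l_1}\cdots f_{\b_k}^{l_k}\mid 0\le l_j\le p-1\}$ of $u(\mathfrak u')$ attached to the fixed ordering $\b_1,\ldots,\b_k$. Since these PBW basis elements are themselves $T$-weight vectors, only those with weight $\mu$ can occur in the expansion, and the surviving multi-indices satisfy $\sum_j l_j\b_j=(p-1)\sum_j\b_j$. Substituting $m_j:=(p-1)-l_j\ge 0$, this becomes $\sum_j m_j\b_j=0$ inside the root lattice $\bbz\Pi\subseteq X(T)$. Writing $\b_j=\sum_i a_{ij}\a_i$ with $a_{ij}\ge 0$ and using the $\bbz$-linear independence of the simple roots in $X(T)$, each coefficient $\sum_j m_j a_{ij}$ vanishes; since all terms are nonnegative and, for each $j$, some $a_{ij}>0$, we conclude $m_j=0$ for every $j$. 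The expansion thus collapses to a single term $c\cdot f_{\b_1}^{p-1}\cdots f_{\b_k}^{p-1}$, so $f_{\b_{i_1}}^{p-1}\cdots f_{\b_{i_k}}^{p-1}=c\cdot f_{\b_1}^{p-1}\cdots f_{\b_k}^{p-1}$ for some $c\in\mathbf F$. Finally, $c\ne 0$ because the left-hand side is itself a PBW basis element of $u(\mathfrak u')$ for the reordering $\b_{i_1},\ldots,\b_{i_k}$ and is therefore nonzero.

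The only delicate step is establishing the one-dimensionality of the relevant weight space. I avoid any complications of positive characteristic by grading with respect to $T$ rather than $\h$, so that weights live in the torsion-free lattice $X(T)$; there the $\bbz$-linear independence of the simple roots and the positive-cone argument for $\sum_j m_j\b_j=0$ go through unchanged, and the rest of the proof is just PBW together with weight counting.
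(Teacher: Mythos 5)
Your proof is correct and takes essentially the same approach as the paper's: expand one of the two products in the PBW basis attached to the other ordering and use the adjoint $T$-action to compare weights, concluding that only the top multi-index survives and that its coefficient is nonzero because the expanded element is itself a PBW basis vector for the other ordering. The only (non-substantive) differences are that you swap which ordering plays the role of the reference PBW basis, and that you spell out the one-dimensionality of the relevant $T$-weight space via the substitution $m_j=(p-1)-l_j$ and $\mathbb Z$-linear independence of the simple roots in $X(T)$, a step the paper leaves implicit in the phrase ``by comparing the $T$-weights.''
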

\begin{proof}Since $u(\mathfrak u')$ is restricted, it is naturally a $T$-module under the adjoint representation. There is a PBW type  basis for $u(\mathfrak u')$ given as (\cite[p. 1057]{fp1}): $$f_{\beta_1}^{l_1}\cdots f_{\beta_{i_k}}^{l_k},\quad 0\leq l_1,\dots,l_k\leq p-1.$$ The $T$-weight of each element  $f_{\beta_1}^{l_1}\cdots f_{\beta_{i_k}}^{l_k}$ is exactly $\sum^k_{s=1}l_s\beta_{i_s}$. Write $f_{\beta_1}^{p-1}\cdots f_{\beta_k}^{p-1}$ as a linear combination of the above basis: $$f_{\beta_1}^{p-1}\cdots f_{\beta_k}^{p-1}=\sum c_l f_{\beta_{i_1}}^{l_1}\cdots  f_{\beta_{i_k}}^{l_k}.$$ By comparing the $T$-weights we see that all coefficients $c_l$ must be zero except for the one for $f_{\beta_{i_1}}^{p-1}\cdots f_{\beta_{i_k}}^{p-1}$, which is nonzero, since $f_{\beta_1}^{p-1}\cdots f_{\beta_k}^{p-1}$ is an element of another basis for $u(\mathfrak u')$. This completes the proof.
\end{proof}

\begin{lemma} Let $\a\in I$, and let $\beta\in\Phi^+\setminus \Phi^+_I$. If  $\mathcal S_{\a}\beta$ is non-isolated,  then  $$\tilde{\mathcal S}_{\a}\beta\subseteq \Phi^+\setminus \Phi^+_I.$$
\end{lemma}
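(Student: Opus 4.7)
The plan is to argue directly at the level of simple-root coefficients, since the claim is really a statement about which $\mathbb{Z}$-linear combinations of $\beta$ and $\alpha$ can lie in the root subsystem $\Phi_I$.

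First, I would unpack the definition: any element of $\tilde{\mathcal{S}}_{\alpha}\beta$ has the form $\gamma = l\beta + m\alpha$ with $l \in \mathbb{N}\setminus 0$ and $m \in \mathbb{Z}$, and belongs to $\Phi^+$. I need to show such a $\gamma$ cannot lie in $\Phi^+_I = \Phi^+ \cap \mathbb{Z}I$.

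Next, since $\beta \in \Phi^+ \setminus \Phi^+_I$, expanding $\beta = \sum_i c_i \alpha_i$ in the simple-root basis (with $c_i \in \mathbb{Z}_{\geq 0}$), there must be some $\alpha_j \in \Pi \setminus I$ with $c_j > 0$; otherwise $\beta$ would be a non-negative integer combination of roots in $I$, forcing $\beta \in \mathbb{Z}I \cap \Phi^+ = \Phi_I^+$. The hypothesis $\alpha \in I$ now means that in the simple-root expansion of $\gamma = l\beta + m\alpha$, the coefficient of $\alpha_j$ receives no contribution from $\alpha$, so it equals $l c_j \geq c_j > 0$.

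Finally, since every element of $\Phi_I \subseteq \mathbb{Z}I$ has zero coefficient on each simple root outside $I$, the nonvanishing of the $\alpha_j$-coefficient of $\gamma$ rules out $\gamma \in \Phi_I$, hence $\gamma \in \Phi^+ \setminus \Phi^+_I$. There is no real obstacle here: the argument is elementary and does not even require the non-isolated hypothesis on $\mathcal{S}_\alpha\beta$ (that hypothesis is present only because $\tilde{\mathcal{S}}_\alpha\beta$ is a concept developed for non-isolated strings in Section~2). In particular, the proof is uniform across types and sidesteps the case-by-case analysis of Proposition~2.1.
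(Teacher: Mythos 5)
Your argument is correct and takes a genuinely different, more elementary route than the paper. The paper's proof first establishes $\mathcal{S}_{\alpha}\beta \subseteq \Phi^+\setminus\Phi^+_I$ using that $\mathfrak{u}$ is an ideal of $\mathfrak{p}_I$, then invokes the classification of Prop.~2.1 to identify $\tilde{\mathcal{S}}_{\alpha}\beta$ in each rank-$2$ type, checks the extra root $2\beta-\alpha$ lands in the right place via the subalgebra property of $\mathfrak{u}$ (namely $e_{\beta-\alpha}, e_\beta \in \mathfrak{u}$ forces $e_{2\beta-\alpha}\in\mathfrak{u}$), and finally handles $G_2$ and the reducible case separately. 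You bypass all of this by observing that any $\gamma = l\beta+m\alpha$ with $l\geq 1$ inherits a strictly positive coefficient on some $\alpha_j\in\Pi\setminus I$ from $\beta$ (which $\beta$ must have, lest $\beta\in\mathbb{Z}I$), and that coefficient is undisturbed because $\alpha$ is itself a simple root in $I$ and so contributes nothing to the $\alpha_j$-coordinate; hence $\gamma\notin\mathbb{Z}I\supseteq\Phi_I$. This is uniform across all types, needs no reference to Prop.~2.1, and, as you note, does not even use the non-isolated hypothesis (which is only there because $\tilde{\mathcal{S}}_\alpha\beta$ is used downstream in that setting). The trade-off is that the paper's route reuses the structural facts about $\mathfrak{u}$ (ideal, subalgebra) that it needs elsewhere anyway, so the case analysis is not entirely wasted effort in context; but as a standalone proof of this lemma, yours is cleaner and more general.
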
 \begin{proof} Since $\mathfrak u$ is an ideal of $\mathfrak p_I$ with roots $\Phi^+\setminus\Phi^+_I$,  we have  $\mathcal S_{\a}\beta\subseteq \Phi^+\setminus \Phi^+_I$. \par
 Suppose that $\Phi$ is irreducible and  not of type $G_2$.  By Prop. 2.1 we have $$\tilde{\mathcal S}_{\a}\beta=\mathcal S_{\a}\beta\quad \mbox{or}\quad \tilde{\mathcal S}_{\a}\beta=\{2\beta-\a\}\cup\mathcal S_{\a}\beta.$$
 The statement clearly holds in the case $\tilde{\mathcal S}_{\a}\beta=\mathcal S_{\a}\beta$. So we  assume  $$\tilde{\mathcal S}_{\a}\beta=\{2\beta-\a\}\cup\mathcal S_{\a}\beta.$$  Since
$\mathcal S_{\a}\beta\subseteq \Phi^+\setminus \Phi^+_I$, we have  $\beta-\a$, $\beta\in \Phi^+\setminus \Phi^+_I$; that is, $\beta-\a$ and $\beta$ are roots of $\mathfrak u$. It follows that
 $e_{\beta-\a}, e_{\beta}\in \mathfrak u$, and hence, $e_{2\beta-\a}\in \mathfrak u$. Therefore, $2\beta-\a$ is also a root of $\mathfrak u$, implying $\tilde{\mathcal S}_{\a}\beta\subseteq \Phi^+\setminus\Phi^+_I$.\par Suppose $\Phi$ is of type $G_2$. For $I=\{\a_1\}$, let $\a=\a_1$ and $\beta=3\a_1+\a_2$. Then we have by the discussion in Sec.2 that $$\mathcal S_{\a}\beta=\{\beta, \beta-\a,\beta-2\a, \beta-3\a\},\quad \tilde{\mathcal S}_{\a}\beta=\{2\beta-3\a\}\cup \mathcal S_{\a}\beta.$$ Assume $I=\{\a_2\}$.
 For $\a=\a_2$ and $\beta_1=3\a_1+2\a_2$, we have $$\tilde{\mathcal S}_{\a}\beta_1=\mathcal S_{\a}\beta_1=\{\beta_1, \beta_1-\a\}.$$
 For $\a=\a_2$ and $\beta_3=\a_1+\a_2$, we have from Sec. 2 that $$\tilde{\mathcal S}_{\a}\beta_3=\{3\beta_3-\a, 3\beta_3-2\a, 2\beta_3-\a, \beta_3, \beta_3-\a\},\quad \mathcal S_{\a}\beta_3=\{\beta_3, \beta_3-\a\}.$$ In each case above we have $\tilde{\mathcal S}_{\a}\beta\subseteq \Phi^+\setminus \Phi^+_I$. \par Suppose $\Phi$ is a disjoint union of irreducible subsystems. Then $I$ is a disjoint union of the  subsets of simple roots  in these subsystems.  Let $\a \in I$ and let $\beta\in \Phi^+\setminus \Phi^+_I$ with $\mathcal S_{\a}\beta$ non-isolated. Then $\a, \beta$ are  in the same irreducible subsystem.  Thus,  we have by the above discussion that $\tilde{\mathcal S}_{\a}\beta\subseteq \Phi^+\setminus\Phi^+_I$.\end{proof}
Let $\a\in I$. By the lemma, we see that the set $\Phi^+\setminus \Phi^+_I$ is a disjoint union of all different  $\tilde{\mathcal  S}_{\a}\beta$ with non-isolated $\mathcal S_{\a}\beta$ and the isolated $\mathcal S_{\a}\beta=\{\beta\}$.
 We order the elements in $\Phi^+\setminus\Phi^+_I$ in such a way that all elements in the same $\tilde{\mathcal S}_{\a}\beta$ with $\mathcal S_{\a}\beta$ non-isolated  are adjacent in the order defined in Sec. 2, and call it an {\it $\a$-order}.\par
 Let $\mathfrak S$ be a subset of $\Phi^+$. We say that $\mathfrak S$ is  {\it a closed subset} if $\a+\beta\in \mathfrak S$ for any $\a, \beta\in \mathfrak S$ such that $\a+\beta\in \Phi^+$. Therefore, $\Phi^+\setminus \Phi^+_I$ is a closed subset of $\Phi^+$. We see that $\mathfrak S$ is a closed subset of $\Phi^+$ if and only if \ $\mathfrak s=:\sum_{\a\in\mathfrak S}\g_{-\a}$ \ is a Lie subalgebra of $\g$;  it is clear that $\mathfrak s$ is restricted.\par Let $\mathfrak S$ be a closed  subset of $\Phi^+$. \ Applying almost verbatim Humphreys's argument
 in the proof of \cite[Lemma 1.4]{hu1}, we get the following result.
 \begin{lemma} Let $(\a_1,\dots, \a_m)$ be any ordering of $\mathfrak S$. If $ht(\a_k)=h$, assume that all exponents $i_j$ in \ $f_{\a_1}^{i_1}\cdots f_{\a_m}^{i_m}\in u(\mathfrak s)$ \ for which $ht(\a_j)\geq h$ are equal to $p-1$. Then, if $f_{\a_k}$ is inserted anywhere into this expression, the result is $0$.
 \end{lemma}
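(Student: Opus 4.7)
The plan is to perform a downward induction on the height $h = ht(\a_k)$, equivalently an induction on $M - h$ where $M = \max\{ht(\a) : \a \in \mathfrak S\}$. The identity driving the argument is $f_{\a_k}^p = 0$ in $u(\mathfrak s)$, which is available because $\mathfrak s$ is a restricted subalgebra of $\g$ and every root vector has trivial $[p]$-power.

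For the base case, I take $\a_k$ of maximum height in $\mathfrak S$. For any $\a_j \in \mathfrak S$, if $\a_k + \a_j \in \Phi^+$ then the closedness of $\mathfrak S$ forces $\a_k + \a_j \in \mathfrak S$ at strictly greater height, contradicting maximality. Hence $[f_{\a_k}, f_{\a_j}] = 0$ for every $j$, so $f_{\a_k}$ is central in $u(\mathfrak s)$; inserting $f_{\a_k}$ and sliding it to position $k$ meets the block $f_{\a_k}^{p-1}$ (present since $ht(\a_k) \geq h$) and yields $f_{\a_k}^p = 0$.

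For the inductive step, suppose the lemma is known for every $\a \in \mathfrak S$ of height strictly greater than $h$. Given an inserted expression $A \cdot f_{\a_k} \cdot B$, I would commute $f_{\a_k}$ step by step toward position $k$. Pushing it past one adjacent block $f_{\a_{j'}}^{i_{j'}}$ produces $f_{\a_{j'}}^{i_{j'}} f_{\a_k} + \sum_{r=0}^{i_{j'}-1} f_{\a_{j'}}^{r}[f_{\a_k}, f_{\a_{j'}}] f_{\a_{j'}}^{i_{j'}-1-r}$. Each error term is, up to a nonzero scalar, the monomial obtained from the original by reducing $i_{j'}$ by one and inserting $f_{\a_s}$ at some interior position, where $\a_s = \a_k + \a_{j'} \in \mathfrak S$ has $ht(\a_s) = h + ht(\a_{j'}) > h$. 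This reduced monomial still satisfies the hypothesis of the lemma at height $ht(\a_s)$: any $\a_{j''}$ with $ht(\a_{j''}) \geq ht(\a_s)$ is distinct from $\a_{j'}$ (since $ht(\a_{j'}) < ht(\a_s)$), so its exponent $p-1$ is untouched. By the induction hypothesis applied to $\a_s$, each error term vanishes. Iterating, the surviving main term eventually places the inserted $f_{\a_k}$ adjacent to $f_{\a_k}^{p-1}$, producing $f_{\a_k}^p = 0$.

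The main obstacle is the bookkeeping of the cascading commutators: after each step one must verify that the residual error term still matches the shape required by the induction hypothesis at the new higher root. The crucial point that makes the induction close cleanly is precisely that the only exponent disturbed by the commutator lies at the root $\a_{j'}$ of height strictly less than $ht(\a_s)$, so every high-height factor $f_{\a_{j''}}^{p-1}$ on which the induction relies remains intact.
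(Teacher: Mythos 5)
Your proof is correct and takes essentially the same approach as the paper, which invokes Humphreys's argument (\cite[Lemma 1.4]{hu1}) ``almost verbatim.'' The downward induction on height, with the closedness of $\mathfrak S$ ensuring that each commutator error term is a root vector in $\mathfrak s$ of strictly greater height so that the induction hypothesis applies to it, is precisely that argument adapted from $\Phi^+$ to a general closed subset.
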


\begin{lemma} Let $\Phi^+\setminus\Phi^+_I=\{\beta_1, \dots, \beta_k\}$. For $f_{\beta_1}^{p-1}\cdots f_{\beta_k}^{p-1}\in u(\mathfrak u')$, we have in $u_{\chi}(\g)$ that
  $$[e_{\a}, f_{\beta_1}^{p-1}\cdots f_{\beta_k}^{p-1}]=0,\quad  [f_{\a}, f_{\beta_1}^{p-1}\cdots f_{\beta_k}^{p-1}]=0$$ for every $\a\in \Phi^+_I$.
\end{lemma}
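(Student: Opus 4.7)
The plan is to establish the two identities for simple $\a\in I$ first and then to pass to an arbitrary $\a\in\Phi^+_I$ by induction on $ht(\a)$.

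\emph{Reduction to simple $\a$.} If $\a\in\Phi^+_I$ is not simple, write $\a=\a_i+\a''$ with $\a_i\in I$ and $\a''\in\Phi^+_I$ (which is possible since $\Phi_I$ is itself a root system with base $I$). The Chevalley relations recalled at the beginning of this section give $e_\a=c\,[e_{\a_i},e_{\a''}]$ and $f_\a=c'\,[f_{\a_i},f_{\a''}]$ for some $c,c'\in\mathbf F\setminus\{0\}$, the nonvanishing being guaranteed by the goodness of $p$. A single application of Jacobi's identity then reduces $[e_\a,F]$ (resp.\ $[f_\a,F]$) to commutators with $e_{\a_i},e_{\a''}$ (resp.\ $f_{\a_i},f_{\a''}$), all of which vanish by the induction hypothesis. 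Hence it suffices to treat $\a\in I$.

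\emph{Commutator with $e_\a$.} Fix $\a\in I$. By Lemma~3.2 we may, up to a nonzero scalar, rearrange $F$ so that its factors appear in an $\a$-order. Lemma~3.3 groups them block-by-block: one block $\tilde f^\beta$ for each class of non-isolated extended $\a$-string $\tilde{\mathcal S}_\a\beta$, and one block $f_\beta^{p-1}$ for each isolated $\beta$. Lemma~3.1 kills $e_\a$ against every non-isolated block, while for an isolated $\beta$ the relations $\beta\pm\a\notin\Phi^+$ force $[e_\a,f_\beta]=0$ and hence $[e_\a,f_\beta^{p-1}]=0$. Since $[e_\a,\cdot]$ is a derivation, $[e_\a,F]=0$.

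\emph{Commutator with $f_\a$.} This is the subtler half, and I expect the main work to be here, since the block/Lemma~3.1 argument is not available. Expanding by the Leibniz rule,
$$[f_\a,F]=\sum_{i=1}^{k}\sum_{j=0}^{p-2}f_{\beta_1}^{p-1}\cdots f_{\beta_{i-1}}^{p-1}\,f_{\beta_i}^{j}\,[f_\a,f_{\beta_i}]\,f_{\beta_i}^{p-2-j}\,f_{\beta_{i+1}}^{p-1}\cdots f_{\beta_k}^{p-1},$$
where $[f_\a,f_{\beta_i}]$ is a nonzero scalar multiple of $f_{\a+\beta_i}$ when $\a+\beta_i\in\Phi^+$, and zero otherwise. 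Since $\mathfrak u'$ is an ideal of $\mathfrak p'_I$, in the nonzero case $\a+\beta_i\in\Phi^+\setminus\Phi^+_I$, so each surviving summand is an insertion of $f_{\a+\beta_i}$ into the product $f_{\beta_1}^{p-1}\cdots f_{\beta_i}^{p-2}\cdots f_{\beta_k}^{p-1}$, in which only $f_{\beta_i}$ has reduced exponent. Because $ht(\a+\beta_i)>ht(\beta_i)$, this unique factor with exponent below $p-1$ has height strictly smaller than that of the inserted element, so the hypothesis of Lemma~3.4 is satisfied and each such insertion vanishes. Summing over $i$ and $j$ gives $[f_\a,F]=0$.
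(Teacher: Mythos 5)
Your proof is correct and takes essentially the same route as the paper's: reduce to simple $\a\in I$, handle $[e_\a,F]$ by rearranging $F$ into an $\a$-order (Lemma 3.2), decomposing it into the $\tilde f^\beta$ blocks and isolated $f_\beta^{p-1}$ factors (Lemma 3.3), and applying Lemma 3.1; handle $[f_\a,F]$ by the Leibniz expansion and the height hypothesis of Lemma 3.4. The only difference is that you make the reduction to simple roots fully explicit via Jacobi and induction on height, where the paper disposes of it in one line by appealing to the remark preceding Lemma 3.1.
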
\begin{proof} By the remark before Lemma 3.1 it suffices to prove the identities for $\a\in I$.\par  For each $\a\in I$, we put the set $\Phi^+\setminus \Phi^+_I$ in a fixed $\a$-order:  $$\beta_{i_1}\prec \cdots\prec \beta_{i_k}.$$  Then $f_{\beta_{i_1}}^{p-1}\cdots f_{\beta_{i_k}}^{p-1}$ is the product of $\tilde f^{\beta}$ for non-isolated $\mathcal S_{\a}\beta$ and $f_{\beta}^{p-1}$ with $\mathcal S_{\a}\beta$ isolated.
By Lemma 3.1, $e_{\a}$ commutes with  every $\tilde f^{\beta}$ with $\mathcal S_{\a}\beta$ non-isolated.  It is clear that $[e_{\a}, f_{\beta}^{p-1}]=0$ if $\mathcal S_{\a}\beta$ is isolated.  Then we have $[e_{\a}, f_{\beta_{i_1}}^{p-1}\cdots f_{\beta_{i_k}}^{p-1}]=0$, and hence $[e_{\a}, f_{\beta_1}^{p-1}\cdots f_{\beta_k}^{p-1}]=0$ by Lemma 3.2.\par To prove the second identity, we apply Lemma 3.4. Recall that $\mathfrak u$ is an ideal of $\mathfrak p_I$ with roots $\Phi^+\setminus \Phi^+_I$. Then  for each $\beta_i\in\Phi^+\setminus \Phi^+_I$, we have \ $\a+\beta_i\in \Phi^+\setminus \Phi^+_I$ if $\a+\beta_i\in\Phi^+$. If $[f_{\a}, f_{\beta_i}]=0$ for all $i$, then it is trivially true that $$[f_{\a}, f_{\beta_1}^{p-1}\cdots f_{\beta_k}^{p-1}]=0.$$ Assume that $[f_{\a}, f_{\beta_i}]\neq 0$ for some $i$. Then  we have $[f_{\a}, f_{\beta_i}]=f_{\a+\beta_i}$. Since $ht(\a+\beta_i)>ht(\beta_i)$ for all $\beta_i$ with $\a+\beta_i\in\Phi^+$, we have by Lemma 3.4 that $$[f_{\a}, f_{\beta_1}^{p-1}\cdots f_{\beta_k}^{p-1}]=\sum _{i, \a+\beta_i\in\Phi^+}\sum^{p-2}_{s=0}f_{\beta_1}^{p-1}\cdots (f_{\beta_i}^sf_{\a+\beta_i}f_{\beta_i}^{p-2-s})\cdots f_{\beta_k}^{p-1}=0.$$
\end{proof}
\begin{lemma} There is a uniquely determined scalar $R^I_{\g}(\lambda)\in \mathbf F$ such that
$$e_{\beta_1}^{p-1}\cdots e_{\beta_k}^{p-1}f_{\beta_1}^{p-1}\cdots f_{\beta_k}^{p-1}\otimes v_{\l}=R^I_{\g}(\l)\otimes v_{\lambda}$$ in $Z^{\chi}_I(\lambda)$.
\end{lemma}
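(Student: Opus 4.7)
The plan is to exploit the $T$-action on $Z^{\chi}_I(\lambda)$ (available thanks to (H1)) and reduce the statement to a weight-combinatorial argument.

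Set $Y := e_{\beta_1}^{p-1}\cdots e_{\beta_k}^{p-1}f_{\beta_1}^{p-1}\cdots f_{\beta_k}^{p-1}\in u_\chi(\g)$. Under the adjoint $T$-action, $e_{\beta_i}^{p-1}$ has weight $(p-1)\beta_i$ and $f_{\beta_i}^{p-1}$ has weight $-(p-1)\beta_i$, so $Y$ has $T$-weight $0$, and therefore $Y\otimes v_\lambda$ has $T$-weight $\lambda$.

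Next I would choose a weight basis $v_1,\dots,v_n$ of $L^{\chi}_I(\lambda)$, producing a PBW $T$-weight basis $\{f_{\beta_1}^{l_1}\cdots f_{\beta_k}^{l_k}\otimes v_j\}$ of $Z^{\chi}_I(\lambda)$ with weights $\mathrm{wt}(v_j)-\sum_i l_i\beta_i$. Expanding $Y\otimes v_\lambda$ in this basis, only terms of weight $\lambda$ can occur, i.e. those satisfying
$$\sum_{i=1}^k l_i\beta_i \;+\; (\lambda-\mathrm{wt}(v_j)) \;=\; 0.$$
Since $v_j$ is obtained from $v_\lambda$ by applying lowering operators $f_\alpha$ with $\alpha\in I$, the second summand is a non-negative integer combination of simple roots in $I$. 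Writing $\beta_i=\sum_{\alpha\in\Pi}c_{i,\alpha}\alpha$ with $c_{i,\alpha}\in\mathbb{Z}_{\geq 0}$ and comparing coefficients at any $\alpha'\in\Pi\setminus I$ gives $\sum_i l_i c_{i,\alpha'}=0$; since each $\beta_i\in\Phi^+\setminus\Phi_I^+$ has $c_{i,\alpha'}>0$ for at least one $\alpha'\in\Pi\setminus I$, this forces $l_i=0$ for every $i$, and then comparison of coefficients at simple roots in $I$ yields $\mathrm{wt}(v_j)=\lambda$.

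Hence only terms of the form $1\otimes v_j$ with $\mathrm{wt}(v_j)=\lambda$ can appear in the expansion of $Y\otimes v_\lambda$. But $v_\lambda$ generates $L^{\chi}_I(\lambda)$ and each lowering operator $f_\alpha$ with $\alpha\in\Phi^+_I$ strictly decreases the $T$-weight, so the $\lambda$-weight space of $L^{\chi}_I(\lambda)$ is one-dimensional, spanned by $v_\lambda$. Therefore $Y\otimes v_\lambda=R^I_\g(\lambda)\cdot(1\otimes v_\lambda)$ for a scalar $R^I_\g(\lambda)\in\mathbf{F}$, uniquely determined since $1\otimes v_\lambda$ is itself a PBW basis vector. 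The only real subtlety is the legitimacy of the $T$-weight decomposition on $u_\chi(\g)$ and on $L^{\chi}_I(\lambda)$, which is standard under (H1); everything else is bookkeeping with root coefficients, and this lemma is the cheap half of the theory---the substantive content of $R^I_\g(\lambda)$ will come later when the scalar is computed explicitly.
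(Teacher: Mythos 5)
Your argument is correct, but it takes a genuinely different route from the paper's. The paper works at the level of the enveloping algebra: it applies the PBW theorem in $U(\g)$ to write $Y := e_{\beta_1}^{p-1}\cdots e_{\beta_k}^{p-1}f_{\beta_1}^{p-1}\cdots f_{\beta_k}^{p-1}$ as $f(h)+\sum u_i^-u_i^0u_i^+$ with $f(h),u_i^0\in U(\mathfrak h)$, uses the fact that $Y$ has adjoint weight $0$ to conclude that each $u_i^+$ is a \emph{nonempty} monomial in the $e_\a$'s, passes to $u_\chi(\g)$, and then invokes the maximality of $v_\lambda$ (i.e.\ $\mathfrak n^+ v_\lambda=0$) to annihilate every non-Cartan term, leaving $1\otimes\bar f(h)v_\lambda$, a scalar multiple of $v_\lambda$. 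You instead work directly inside the module $Z^\chi_I(\lambda)$, expanding $Y\otimes v_\lambda$ in the PBW weight basis $\{f_{\beta_1}^{l_1}\cdots f_{\beta_k}^{l_k}\otimes v_j\}$ and matching weights; your root-coefficient bookkeeping, together with the observation that the $\lambda$-weight space of $L^\chi_I(\lambda)$ is one-dimensional, replaces the paper's use of the algebra-level triangular decomposition and the maximality of $v_\lambda$. Both proofs pivot on the same point --- that $Y$ has weight $0$ --- but yours stays module-theoretic while the paper's stays algebra-theoretic. The paper's version has the small advantage of making visible, via the explicit element $f(h)\in U(\mathfrak h)$, that $R^I_\g(\lambda)$ is a polynomial function of $\lambda$, a fact the paper relies on in Section 4; your proof establishes the scalar for each fixed $\lambda$ without directly exhibiting its polynomial dependence. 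One cosmetic remark: you phrase everything in terms of a $T$-action, which is slightly delicate if $\lambda\notin X(T)$; but you only ever use the $\mathfrak h$-weight decomposition of $u_\chi(\g)$ and of $Z^\chi_I(\lambda)$, which is available without qualification, so this is a matter of wording rather than a gap.
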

\begin{proof}
Let $U(\g)$ (resp. $U(\mathfrak h)$) be the universal enveloping algebra of $\g$ (resp. $\mathfrak h$). Fix an ordering $\a_1,\dots, \a_t$ of the positive roots $\Phi^+$. By the PBW theorem for $U(\g)$, we have $$e_{\beta_1}^{p-1}\cdots e_{\beta_k}^{p-1}f_{\beta_1}^{p-1}\cdots f_{\beta_k}^{p-1}=f(h)+\sum u^-_iu^0_iu^+_i,$$ with $f(h), u^0_i\in U(\mathfrak h)$ and where each $u^+_i$ (resp. $u^-_i$)  is of the form $$e_{\a_1}^{l_1}\cdots e_{\a_t}^{l_t}(\text{resp.}\quad f_{\a_1}^{s_1}\cdots f_{\a_t}^{s_t}), \quad l_j, s_j\geq 0,$$ with $u^+_i$ and $u^-_i$ not both equal to $1$.  Note that $U(\g)$ is  naturally a $T$-module under the adjoint representation.  Let us denote the $T$-weight of  a weight vector $u\in U(\g)$ by $\mbox{wt}(u)$.  Since $$\mbox{wt}(e_{\beta_1}^{p-1}\cdots e_{\beta_k}^{p-1}f_{\beta_1}^{p-1}\cdots f_{\beta_k}^{p-1})=0,$$   we have $\mbox{wt}(u^+_i)=-\mbox{wt}(u^-_i)\neq 0$ for every $i$.  It follows that $\sum^t_{i=1}l_i>0$, for every $u^+_i=e_{\a_1}^{l_1}\cdots e_{\a_t}^{l_t}$.  \par We use for the images of the generators $e_{\a}, f_{\a}, h_{\a}$ in $u_{\chi}(\g)$  the same notation as before in $U(\g)$. By our assumption we have $$u_{\chi}(\g)=u_{\chi}(\mathfrak n^-)u_{\chi}(\mathfrak h)u(\mathfrak n^+).$$ Then we have in $u_{\chi}(\g)$:$$(*)\quad e_{\beta_1}^{p-1}\cdots e_{\beta_k}^{p-1}f_{\beta_1}^{p-1}\cdots f_{\beta_k}^{p-1}=\bar f(h)+\sum \bar u^-_i\bar u^0_i\bar u^+_i,$$  where $\bar f(h), \bar u^0_i\in u_{\chi}(\mathfrak h), \bar u^{-}_i\in u_{\chi}(\mathfrak n^{-})$, $\bar u^+_i\in u(\mathfrak n^+)$.\par  For each $u^+_i=e_{\a_1}^{l_1}\cdots e_{\a_t}^{l_t}\in U(\g)$, if $l_s\geq p$ for some $s$, then $\bar u^+_i=0$. On the other hand,  if $l_s\leq p-1$ for every $s$, the $\bar u^+_i=e_{\a_1}^{l_1}\cdots e_{\a_t}^{l_t}\in u_{\chi}(\g)$, so $\bar u_i^+\neq 1$ (since $u^+_i\neq 1$). It follows that $\bar u_i^+v_{\l}=0$.  Applying both sides of  $(*)$ to  $1\otimes v_{\lambda}$, we have in $Z^{\chi}_I(\lambda)$ that
$$e_{\beta_1}^{p-1}\cdots e_{\beta_k}^{p-1}f_{\beta_1}^{p-1}\cdots f_{\beta_k}^{p-1}\otimes v_{\lambda}=1\otimes \bar f(h)v_{\lambda}=R^I_{\g}(\lambda)\otimes v_{\lambda}$$ for some scalar $R^I_{\g}(\lambda)$.
\end{proof}
\begin{theorem} The $u_{\chi}(\g)$-module $Z^{\chi}_I(\lambda)$ is simple if and only if $R^I_{\g}(\lambda)\neq 0$.
\end{theorem}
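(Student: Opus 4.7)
The plan is as follows. The element $v_0 := 1\otimes v_{\lambda}$ generates $Z^{\chi}_I(\lambda)$ as a $u_{\chi}(\g)$-module (since $v_{\lambda}$ generates $L^{\chi}_I(\lambda)$ over $u_{\chi}(\mathfrak p_I)$), and $w := f_{\beta_1}^{p-1}\cdots f_{\beta_k}^{p-1}\otimes v_{\lambda}$ is the natural ``opposite'' vector. I will prove two things: (a) the $\lambda$-weight subspace of $N := u_{\chi}(\g)\cdot w$ is spanned by $R^I_{\g}(\lambda)v_0$; and (b) every nonzero $u_{\chi}(\g)$-submodule of $Z^{\chi}_I(\lambda)$ contains $w$. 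Granting (a) and (b) the theorem follows: if $R^I_{\g}(\lambda)\ne 0$ then any nonzero submodule $M$ contains $w$ by (b), whence $R^I_{\g}(\lambda)v_0\in M$ by (a), so $v_0\in M$ and $M = Z^{\chi}_I(\lambda)$; if $R^I_{\g}(\lambda)=0$ then $w\in N$ but $v_0\notin N$ by (a), so $N$ is a proper nonzero submodule.

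For (a), Lemma~3.6 already puts $R^I_{\g}(\lambda)v_0$ inside $N$. For the reverse inclusion, the PBW decomposition $u_{\chi}(\g) = u_{\chi}(\mathfrak n^-)u_{\chi}(\mathfrak h)u(\mathfrak n^+)$ together with the fact that $\lambda$ is the unique maximal weight of $Z^{\chi}_I(\lambda)$ imply that any $u\cdot w$ of $\mathfrak h$-weight $\lambda$ comes from $u = u^0 u^+$ with $u^+\in u(\mathfrak n^+)$ of weight $(p-1)\sum_i\beta_i$ (a nontrivial negative factor $u^-$ would force $u^+ w$ to a weight exceeding $\lambda$, hence $u^+w = 0$). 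Further decomposing $u^+ \in u(\mathfrak n^+_I)\,u(\mathfrak u)$ by PBW and matching the $\Pi\setminus I$-components of the simple-root expansion of its weight, only PBW monomials of the form $\prod_i e_{\beta_i}^{p-1}$ in some ordering can contribute; by the $T$-weight argument of Lemma~3.2 these are nonzero scalar multiples of $e_{\beta_1}^{p-1}\cdots e_{\beta_k}^{p-1}$, which sends $w$ to $R^I_{\g}(\lambda)v_0$ by Lemma~3.6. This establishes (a).

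For (b), let $M$ be a nonzero submodule. Since $\chi(\mathfrak u') = 0$ and $p$ is good, $u(\mathfrak u') = u_{\chi}(\mathfrak u')$ is a local Frobenius algebra whose one-dimensional socle is spanned by $f_{\beta_1}^{p-1}\cdots f_{\beta_k}^{p-1}$; consequently $V := Z^{\chi}_I(\lambda)^{\mathfrak u'} = f_{\beta_1}^{p-1}\cdots f_{\beta_k}^{p-1}\otimes L^{\chi}_I(\lambda)$, and since every nonzero $\mathfrak u'$-module has nonzero $\mathfrak u'$-invariants, $M^{\mathfrak u'} = M\cap V\ne 0$. Because $\tilde\g_I$ normalises $\mathfrak u'$, both $V$ and $M\cap V$ are $u_{\chi}(\tilde\g_I)$-submodules. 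The main obstacle is to show that $V$ is \emph{simple} over $u_{\chi}(\tilde\g_I)$, which will force $M\cap V = V$ and in particular $w\in M$. By Lemma~3.5 the $\mathbf F$-linear bijection $f_{\beta_1}^{p-1}\cdots f_{\beta_k}^{p-1}\otimes v\mapsto v$ intertwines the $e_{\alpha}$- and $f_{\alpha}$-actions for $\alpha\in\Phi^+_I$ and carries the $\mathfrak h$-action on $V$ to the action $h\mapsto h - (p-1)\sum_i\beta_i(h)\cdot\mathrm{id}$ on $L^{\chi}_I(\lambda)$: a shift by scalars. A scalar shift does not alter the lattice of $\tilde\g_I$-stable subspaces, so simplicity of $L^{\chi}_I(\lambda)$ transfers to $V$. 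This scalar (as opposed to operator) nature of the $\mathfrak h$-shift, made visible by Lemma~3.5, is the crux of the argument.
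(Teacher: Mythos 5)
Your proof is correct, and the interesting part is that your argument for necessity (simplicity $\Rightarrow$ $R^I_{\g}(\lambda)\ne 0$) takes a genuinely different route from the paper's. The paper introduces the opposite parabolic $\mathfrak p'_I$, observes that $L^{\chi}_I(\lambda)' = f_{\beta_1}^{p-1}\cdots f_{\beta_k}^{p-1}\otimes L^{\chi}_I(\lambda)$ is a $u_{\chi}(\mathfrak p'_I)$-module, and shows the canonical map $u_{\chi}(\g)\otimes_{u_{\chi}(\mathfrak p'_I)}L^{\chi}_I(\lambda)'\to Z^{\chi}_I(\lambda)$ is an isomorphism by a dimension count; then $R^I_{\g}(\lambda)v_0$ is recognised as the image of a manifestly nonzero PBW element of the opposite induction. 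You instead compute the $\lambda$-weight space of $N = u_{\chi}(\g)\cdot w$ directly: using the triangular PBW decomposition and the maximality of $\lambda$ to kill any term with a nontrivial lowering factor, then using the simple-root expansion to pin down the raising factor, you show $N_\lambda = \mathbf F\cdot R^I_{\g}(\lambda)v_0$. When $R^I_{\g}(\lambda)=0$ this exhibits $N$ as a proper nonzero submodule with no opposite induction in sight, and when $R^I_{\g}(\lambda)\ne 0$ it feeds into your part (b). Your approach is more computational but self-contained, and it unifies both directions around the single calculation (a). The paper's approach buys conceptual clarity (a Verma/co-Verma duality picture) at the cost of an extra auxiliary module.

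Your part (b) is essentially the paper's sufficiency argument repackaged: the paper's ``apply appropriate $f_{\beta_i}$'s to reach $f_{\beta_1}^{p-1}\cdots f_{\beta_k}^{p-1}\otimes v\in N$ and then move $u_{\chi}(\tilde\g_I)$ across by Lemma 3.5'' is exactly your ``$M^{\mathfrak u'}$ is nonzero and sits inside the simple $\tilde\g_I$-module $V$''. The socle/local-Frobenius language is a clean way to say it, and the observation that the $\mathfrak h$-shift is scalar (hence lattice-preserving) is the right way to transfer simplicity via Lemma 3.5.

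One detail you should spell out in (a): ``matching the $\Pi\setminus I$-components'' pins down all exponents to $p-1$ only because every $\beta\in\Phi^+\setminus\Phi^+_I$ has a \emph{strictly positive} coefficient on at least one root in $\Pi\setminus I$, and the resulting linear relation $\sum_j(p-1-m_j)\beta_j\in\mathbb{N}I$ has all its $\Pi\setminus I$-coefficients given by sums of nonnegative terms, so each $(p-1-m_j)$ must vanish; after that the $\Phi^+_I$-exponents vanish too. This is correct, but it is the crux of why only $e_{\beta_1}^{p-1}\cdots e_{\beta_k}^{p-1}$ survives and deserves a sentence. Also note that $u(\mathfrak u')=u_{\chi}(\mathfrak u')$ needs only $\chi(\mathfrak u')=0$, not that $p$ is good; the parenthetical is harmless but unnecessary.
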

\begin{proof} Suppose $R^I_{\g}(\lambda)\neq 0$. Using the PBW theorem for the $\chi$-reduced enveloping algebra $u_{\chi}(\g)$ (\cite[Theorem 5.3.1]{sf}) and our assumption that $\chi (\mathfrak u')=0$, we have a  natural vector space isomorphism $$Z^{\chi}_I(\l)\cong u_{\chi}(\mathfrak u')\otimes _{\mathbf F}L^{\chi}_I(\l)=u(\mathfrak u')\otimes _{\mathbf F}L^{\chi}_I(\l).$$ Put the elements in  $\Phi^+\setminus \Phi^+_I$  in the order of ascending heights:  $\beta_1,\dots, \beta_k$.
Therefore, $Z^{\chi}_I(\l)$ has as basis the set $\{f_{\beta_1}^{l_1}\cdots f_{\beta_k}^{l_k}\otimes v_j|0\leq l_i\leq p-1, 1\leq j\leq n\}$, where $\{v_j|1\leq j\leq n\}$ is a basis for $L^{\chi}_I(\l)$.\par Let $N$ be a nonzero submodule of $Z_I^{\chi}(\lambda)$.  There exists a nonzero element $x\in N$, which we can write $$x=\sum_l c_l f_{\beta_1}^{l_1}\cdots f_{\beta_k}^{l_k}\otimes v_l, $$ where the sum is over all tuples $l=(l_1,\dots,l_k)$ with $
0\leq l_i\leq p-1$ and where $c_l\in\mathbf F$ and $v_l\in L^{\chi}_I(\l)$. By applying appropriate $f_{\beta_i}$'s, we get $f_{\beta_1}^{p-1}\cdots f_{\beta_k}^{p-1}\otimes v\in N$  for some nonzero $v\in L^{\chi}_I(\lambda)$.\par
 It follows from hypothesis (H3) in the introduction that $\mathfrak u$ is the nilradical of the parabolic subalgebra $\mathfrak p_I$. By \cite[Coro. 3.8]{sf}, $L^{\chi}_I(\l)$ is annihilated by $\mathfrak u$, and is hence a simple $u_{\chi}(\tilde\g_I)$-module.  Therefore $u_{\chi}(\tilde\g_I)v=L^{\chi}_I(\lambda)$. Then using Lemma 3.5, we have $$\begin{aligned} f_{\beta_1}^{p-1}\cdots f_{\beta_k}^{p-1}\otimes L^{\chi}_I(\lambda)&=f_{\beta_1}^{p-1}\cdots f_{\beta_k}^{p-1}\otimes u_{\chi}(\tilde\g_I)v\\& \subseteq u_{\chi}(\tilde\g_I)f_{\beta_1}^{p-1}\cdots f_{\beta_k}^{p-1}\otimes v\\ &\subseteq N,\end{aligned}$$ so that $f_{\beta_1}^{p-1}\cdots f_{\beta_k}^{p-1}\otimes v_{\lambda}\in N$. By Lemma 3.6, $$R^I_{\g}(\lambda)\otimes v_{\lambda}=e_{\beta_1}^{p-1}\cdots e_{\beta_k}^{p-1}f_{\beta_1}^{p-1}\cdots f_{\beta_k}^{p-1}\otimes v_{\lambda}\in N,$$ and hence $1\otimes v_{\lambda}\in N$, implying  $N=Z_I^{\chi}(\lambda)$. We conclude  that $Z_I^{\chi}(\lambda)$ is simple.\par
Suppose that $Z^{\chi}_I(\lambda)$ is simple. Recall the definition of the parabolic subalgebra $\mathfrak p'_I=\tilde\g_I+\mathfrak u'$. Since $f_{\beta_i}f_{\beta_1}^{p-1}\cdots f_{\beta_k}^{p-1}=0$ for all $i$, it follows that  $$L^{\chi}_I(\l)'=:f_{\beta_1}^{p-1}\cdots f_{\beta_k}^{p-1}\otimes L^{\chi}_I(\lambda)$$ is a $u_{\chi}(\mathfrak p'_I)$-module that is isomorphic to $L^{\chi}_I(\lambda)$ as vector spaces. The canonical $u_{\chi}(\g)$-module homomorphism  $$\varphi: u_{\chi}(\g)\otimes_{u_{\chi}(\mathfrak p'_I)} L^{\chi}_I(\lambda)'\longrightarrow Z^{\chi}_I(\lambda)$$ induced by the embedding $L^{\chi}_I(\lambda)'\subseteq Z^{\chi}_I(\l)$ is trivially nonzero, and is therefore surjective since $Z^{\chi}_I(\lambda)$ is simple. Comparing the dimensions we see that $\varphi$ must be an isomorphism.\par  Now $v_{\l}$ is nonzero, so $v=:e_{\beta_1}^{p-1}\cdots e_{\beta_k}^{p-1}\otimes (f_{\beta_1}^{p-1}\cdots f_{\beta_k}^{p-1}\otimes v_{\lambda})$ is a nonzero element of  $u_{\chi}(\g)\otimes_{u_{\chi}(\mathfrak p'_I)} L^{\chi}_I(\lambda)'$. Therefore, $$R^I_{\g}(\lambda)\otimes v_{\l}=e_{\beta_1}^{p-1}\cdots e_{\beta_k}^{p-1}f_{\beta_1}^{p-1}\cdots f_{\beta_k}^{p-1}\otimes v_{\lambda}=\varphi (v)\neq 0,$$ implying $R^I_{\g}(\l)\neq 0$.
\end{proof}
Let us look at an application of Theorem 3.7. In \cite[5.1]{fp2}, Friedlander and Parshall asked the following question: Can one give necessary and sufficient condition on a simple module for a parabolic subalgebra $\mathfrak p_I$ to remain simple upon induction to $\g$.  Clearly under our assumption the question is answered by the theorem.\par
\section{A formula for $R^I_{\g}(\lambda)$}
In this section,  we  determine  $R^I_{\g}(\lambda)$ using the polynomial defined by Rudakov (\cite{ru}).  Recall the notation $\g'$ and  $\tilde\g_I$. Define $\g_I=[\tilde\g_I, \tilde\g_I]$.  Since $\g'\supseteq [\g,\g]\supseteq \g_I$ by \cite[Coro.10.5]{hu1}, $\g_I$ is spanned by a subset of the Chevalley basis of $\g'$. This ensures the application of \cite[Prop. 8]{ru} to $\g_I$.
For each $\a\in\Phi$, we shall write $\a$ instead of its derivative $d\a$ by abuse of notation.  \par Let $\chi\in \g^*$ as given earlier.
 Then $\chi$ can be written as $\chi=\chi_s+\chi_n$, with $\chi_s(\mathfrak n^++\mathfrak n^-)=0$ and $\chi_n(\mathfrak h+\mathfrak n^+)=0$.
 For each simple $u_{\chi_s}(\mathfrak p_I)$-module $L^{\chi_s}_I(\lambda)$ ($\lambda\in \mathfrak h^*$), define the induced module $$Z^{\chi_s}_I(\lambda)=u_{\chi_s}(\g)\otimes_{u_{\chi_s}(\mathfrak p_I)}L^{\chi_s}_I(\lambda).$$ Let $v_{\l}\in L^{\chi_s}_I(\lambda)$ be a maximal vector of weight $\lambda$. In a similar way as in the last section we define the scalar $R^I_{\g}(\lambda)_s$ by $$e_{\beta_1}^{p-1}\cdots e_{\beta_k}^{p-1}f_{\beta_1}^{p-1}\cdots f^{p-1}_{\beta_k}\otimes v_{\lambda}=R^I_{\g}(\lambda)_s\otimes v_{\lambda}.$$
\begin{lemma} $R^I_{\g}(\lambda)_s=R^I_{\g}(\lambda)$ for any $\lambda\in \mathfrak h^*$.
\end{lemma}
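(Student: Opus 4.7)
The plan is to re-run the proof of Lemma 3.6 verbatim, replacing $\chi$ by $\chi_s$ throughout, and to observe that the scalar extracted at the end depends on the polynomial $f(h)$ and on $\lambda$, but not on $\chi$ itself. The coincidence $R^I_{\g}(\lambda)_s = R^I_{\g}(\lambda)$ will then be automatic.

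First I would recall that the starting identity
$$e_{\beta_1}^{p-1}\cdots e_{\beta_k}^{p-1}f_{\beta_1}^{p-1}\cdots f_{\beta_k}^{p-1} = f(h) + \sum u^-_i u^0_i u^+_i$$
is an identity in the ordinary universal enveloping algebra $U(\g)$, with $f(h), u^0_i \in U(\mathfrak h)$ and each $u^+_i = e_{\a_1}^{l_1}\cdots e_{\a_t}^{l_t}$ of strictly positive $T$-weight. This identity is entirely $\chi$-independent.

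Next, I would pass this identity to $u_{\chi_s}(\g)$ and apply both sides to $1\otimes v_{\lambda} \in Z^{\chi_s}_I(\lambda)$. The decomposition $\chi = \chi_s + \chi_n$ forces $\chi_s(\mathfrak n^+) = 0$, so $u(\mathfrak n^+) = u_{\chi_s}(\mathfrak n^+)$, and the argument of Lemma 3.6 applies without change: each $\bar u^+_i$ is either zero in $u_{\chi_s}(\g)$ or else a nontrivial positive-weight element of $u(\mathfrak n^+)$, which annihilates $v_{\lambda}$ since $v_{\lambda}$ (a maximal vector of the simple $u_{\chi_s}(\mathfrak p_I)$-module $L^{\chi_s}_I(\lambda)$, annihilated by $\mathfrak u$ via \cite[Coro. 3.8]{sf}) is killed by all of $\mathfrak n^+$. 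Consequently $\sum \bar u^-_i \bar u^0_i \bar u^+_i$ contributes nothing after the application, and what remains is
$$R^I_{\g}(\lambda)_s \otimes v_{\lambda} = 1\otimes \bar f(h) v_{\lambda}.$$

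The decisive step is the evaluation $\bar f(h) v_{\lambda} = f(\lambda) v_{\lambda}$, where $f(\lambda)$ denotes the substitution $h_\a \mapsto \lambda(h_\a)$. This holds because $h \cdot v_{\lambda} = \lambda(h) v_{\lambda}$ for every $h \in \mathfrak h$, so every monomial in $h_\a$'s acts by the corresponding power of $\lambda(h_\a)$; the relation $h_\a^p = h_\a^{[p]} + \chi(h_\a)^p$ is never invoked when computing this action on a weight vector. Thus $R^I_{\g}(\lambda)_s = f(\lambda)$, a quantity visibly independent of $\chi$. The identical argument for $\chi$ (which is precisely Lemma 3.6) gives $R^I_{\g}(\lambda) = f(\lambda)$, and the two scalars agree.

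The only point that requires care, and hence the main thing to verify, is that the polynomial $f(h)$ really does act on $v_{\lambda}$ as the naive substitution $f(\lambda)$ in both $u_{\chi}(\mathfrak h)$ and $u_{\chi_s}(\mathfrak h)$; but as noted this follows at once from $v_{\lambda}$ being an $\mathfrak h$-weight vector of weight $\lambda$. No genuine computation is needed, and the lemma reduces to the observation that the construction of $R^I_{\g}(\lambda)$ in Lemma 3.6 only ever sees $\chi$ through its restriction to $\mathfrak h$, which is common to $\chi$ and $\chi_s$.
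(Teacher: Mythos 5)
Your proposal is correct and follows essentially the same route as the paper: both start from the $\chi$-independent identity in $U(\g)$, both observe that after applying to $1\otimes v_\lambda$ the $u^+_i$-terms die, and both conclude that the surviving scalar depends only on $f(h)$ and $\lambda$. The paper phrases the last step structurally, noting via the PBW decompositions $u_\chi(\g)\cong u_{\chi_n}(\mathfrak n^-)\otimes u_{\chi_s}(\mathfrak h)\otimes u(\mathfrak n^+)$ and $u_{\chi_s}(\g)\cong u(\mathfrak n^-)\otimes u_{\chi_s}(\mathfrak h)\otimes u(\mathfrak n^+)$ that $\bar f(h)$ is literally the same element of $u_{\chi_s}(\mathfrak h)$ in both reduced algebras (since $\chi|_{\mathfrak h}=\chi_s|_{\mathfrak h}$), whereas you phrase it computationally by noting that $f(h)$ acts on the weight vector $v_\lambda$ by the substitution $f(\lambda)$ independently of $\chi$ — two equivalent ways of expressing the same point.
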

\begin{proof}From the last section, we have in $U(\g)$ that $$(1)\quad e_{\beta_1}^{p-1}\cdots e_{\beta_k}^{p-1}f_{\beta_1}^{p-1}\cdots f^{p-1}_{\beta_k}=f(h)+\sum u^-_iu^0_iu^+_i,$$ where each $u^+_i$ (resp. $u^-_i$) is in the form $e_{\a_1}^{l_1}\cdots e_{\a_t}^{l_t}$ (resp. $f_{\a_1}^{k_1}\cdots f_{\a_t}^{k_t}$) with $$l_1,\cdots, l_t, k_1,\cdots,k_t\in \mathbb N,\quad \sum^t_{i=1} l_i>0,\quad \sum^t_{i=1}k_i>0.$$  In view of the PBW type bases for $u_{\chi}(\g)$ and $u_{\chi_s}(\g)$ (see \cite[Th. 5.3.1]{sf}), we have the isomorphisms of vector spaces $$u_{\chi}(\g)\cong u_{\chi_n}(\mathfrak n^-)\otimes  u_{\chi_s}(\mathfrak h)\otimes  u(\mathfrak n^+),\quad \quad u_{\chi_s}(\g)\cong u(\mathfrak n^-) \otimes u_{\chi_s}(\mathfrak h)\otimes  u(\mathfrak n^+).$$  Then the images of the elements $f(h), u^0_i, u^+_i$ in (1) are the same in both $u_{\chi}(\g)$ and $u_{\chi_s}(\g)$. Applying the images of $(1)$ to $1\otimes v_{\lambda}$ in $Z^{\chi}_I(\lambda)$ and $Z^{\chi_s}_I(\lambda)$ respectively,  we obtain the same element $1\otimes \bar f(h)v_{\lambda}\in 1\otimes u_{\chi_s}(\mathfrak h)v_{\lambda}$. It follows that $R^I_{\g}(\lambda)_s=R^I_{\g}(\lambda)$.
\end{proof}
By the lemma, in calculating $R^I_{\g}(\lambda)$, we may assume $\chi=\chi_s$.  With this assumption, Lemma 3.2 says that any two products $e_{\a_1}^{p-1}\cdots e_{\a_t}^{p-1}\in u(\mathfrak n^+)$ (or $f_{\a_1}^{p-1}\cdots f_{\a_t}^{p-1}\in u(\mathfrak n^-$)) in different orders  are equal (up to scalar multiple).\par
 For the Borel subalgebra $\mathfrak b=\mathfrak h+\mathfrak n^+$ of $\g$,   let $\mathbf Fv_{\l}$ be the 1-dimensional $u_{\chi_s}(\mathfrak b)$-module with $v_{\lambda}$ a maximal vector of weight $\lambda\in \mathfrak h^*$. Define the induced $u_{\chi_s}(\g)$-module $$Z^{\chi_s}(\lambda)=u_{\chi_s}(\g)\otimes _{u_{\chi_s}(\mathfrak b)}\mathbf Fv_{\lambda}.$$
Put all positive roots in $\Phi^+$ in the order of ascending heights: $$\a_{i_1},\a_{i_2},\cdots, \a_{i_t}.$$ Let $h_{\a}=[e_{\a}, f_{\a}]$ for all $\a\in\Phi^+$.  Then we have by \cite[Prop. 8]{ru} that $$e_{\a_{i_1}}^{p-1}\cdots e_{\a_{i_t}}^{p-1}f_{\a_{i_1}}^{p-1}\cdots f_{\a_{i_t}}^{p-1}\otimes v_{\lambda}=R_{\g}(\lambda)\otimes v_{\lambda},$$ where $R_{\g}(\lambda)=(-1)^t\Pi^t_{i=1}[(\lambda+\rho)(h_{\a_i})^{p-1}-1].$\par
Let $\mathfrak b_I=\mathfrak b\cap \g_I$. Then $\mathfrak b_I$ is a Borel subalgebra of $\g_I$. Define the
induce $u_{\chi_s}(\g_I)$-module $u_{\chi_s}(\g_I)\otimes_{u_{\chi_s}(\mathfrak b_I)}\mathbf Fv_{\lambda}$, which can be canonically imbedded in $Z^{\chi_s}(\lambda)$. Put the roots in $\Phi^+_I$ in the order of ascending heights: $\a_{j_1}, \dots, \a_{j_s}$. Using \cite[Prop. 8]{ru} for $\g_I$,   we have $$e_{\a_{j_1}}^{p-1}\cdots e_{\a_{j_s}}^{p-1}f_{\a_{j_1}}^{p-1}\cdots f_{\a_{j_s}}^{p-1}\otimes v_{\lambda}=(-1)^s\Pi^s_{i=1}[(\lambda+\rho_I)(h_{\a_{j_i}})^{p-1}-1]\otimes v_{\lambda}$$ in $Z^{\chi_s}(\lambda)$, where $\rho_I=\frac{1}{2}\sum_{\a\in\Phi_I^+}\a$. We denote $ (-1)^s\Pi^s_{i=1}[(\lambda+\rho_I)(h_{\a_{j_i}})^{p-1}-1]$ by $R_{\g_I}(\lambda)$.\par
 Remark: As $\l\in\mathfrak h^*$ varies, each $\l (h_{\a})$ with $\a\in\Phi^+$ can be viewed as a (linear) polynomial on $\mathfrak h^*$ as follows:  For the basis $h_{\a_1},\dots, h_{\a_l}$ of $\mathfrak h$, let $h_{\a_1}^*,\dots, h_{\a_l}^*$ be a basis of $\mathfrak h^*$ such that $$h_{\a_i}^*(h_{\a_j})=\delta_{ij}\quad\mathbin{\mathrm{for}}\quad i,j=1,\dots,l.$$ Then  each $\l\in\mathfrak h^*$ can be written as $\l=\sum_{i=1}^lx_i h_{\a_i}^*$, $x_i\in\mathbf F$, so that $\l(h_{\a_i})=x_i$ for $i=1,\dots, l$. For each $\a\in\Phi^+$, using the property of the Chevalley basis (\cite[Th. 25.2(c)]{hu}) that $h_{\a}$ is a $\mathbb Z$-linear combination of $h_{\a_1},\dots, h_{\a_l}$, say $h_{\a}=\sum^l_{i=1} k_ih_{\a_i}$, we get $\l(h_{\a})=\sum^l_{i=1}k_ix_i$.  Therefore, $R_{\g}^I(\lambda)$, $R_{\g}(\l)$, and $R_{\g_I}(\l)$ are all polynomials in variables $x_1,\dots, x_l$.\par
\begin{lemma} $$R^I_{\g}(\lambda)R_{\g_I}(\lambda)=cR_{\g}(\lambda),\quad c\in\mathbf F\setminus 0.$$
  \end{lemma}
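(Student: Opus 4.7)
By Lemma 4.1 we may take $\chi = \chi_s$ throughout. Write
\[
E = e_{\alpha_{i_1}}^{p-1}\cdots e_{\alpha_{i_t}}^{p-1}, \qquad F = f_{\alpha_{i_1}}^{p-1}\cdots f_{\alpha_{i_t}}^{p-1},
\]
and let $E_I, F_I, E_u, F_u$ denote the analogous products with indices restricted to $\Phi^+_I$ and $\Phi^+\setminus\Phi^+_I$ respectively. By the reordering principle noted right after Lemma~4.1 (applied once in $u(\mathfrak n^+)$ and once in $u(\mathfrak n^-)$), there are nonzero scalars with $E = c_1 E_u E_I$ and $F = c_2 F_I F_u$, so
\[
EF = c_0\, E_u E_I F_I F_u \quad\text{in } u_{\chi_s}(\g), \qquad c_0\in\mathbf F\setminus 0.
\]

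The decisive input is Lemma~3.5: since its proof uses only the bracket relations and the identities $f_\beta^p = 0$, it is equally valid in $u_{\chi_s}(\g)$. It gives $[e_\alpha, F_u] = [f_\alpha, F_u] = 0$ for every $\alpha\in\Phi^+_I$, so both $E_I$ and $F_I$ commute with $F_u$, and therefore
\[
EF = c_0\, E_u F_u E_I F_I \quad\text{in } u_{\chi_s}(\g).
\]
Applying both sides to $1\otimes v_\lambda\in Z^{\chi_s}(\lambda)$: the left-hand side yields $R_\g(\lambda)\otimes v_\lambda$ by Rudakov's Prop.~8, while the computation recorded just above this lemma gives $E_IF_I\otimes v_\lambda = R_{\g_I}(\lambda)\otimes v_\lambda$ in $Z^{\chi_s}(\lambda)$. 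Hence
\[
R_\g(\lambda)\otimes v_\lambda \;=\; c_0\, R_{\g_I}(\lambda)\,\bigl(E_u F_u\otimes v_\lambda\bigr) \quad\text{in } Z^{\chi_s}(\lambda).
\]

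To identify the remaining factor, I pass to $Z^{\chi_s}_I(\lambda)$ through the canonical surjection $\pi\colon Z^{\chi_s}(\lambda)\twoheadrightarrow Z^{\chi_s}_I(\lambda)$ obtained by transitivity of induction through $\mathfrak b\subseteq\mathfrak p_I$ combined with the quotient map $u_{\chi_s}(\mathfrak p_I)\otimes_{u_{\chi_s}(\mathfrak b)}\mathbf F v_\lambda \twoheadrightarrow L^{\chi_s}_I(\lambda)$. Since $\pi(1\otimes v_\lambda) = 1\otimes v_\lambda$, applying $\pi$ to the displayed identity and invoking the defining relation $E_uF_u\otimes v_\lambda = R^I_\g(\lambda)\otimes v_\lambda$ in $Z^{\chi_s}_I(\lambda)$ (together with Lemma~4.1) yields $R_\g(\lambda)\otimes v_\lambda = c_0 R_{\g_I}(\lambda) R^I_\g(\lambda)\otimes v_\lambda$; because $1\otimes v_\lambda$ is nonzero in $Z^{\chi_s}_I(\lambda)$, the desired identity follows with $c = c_0^{-1}$. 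The only real subtlety is verifying that Lemma~3.5 extends from $u_\chi(\g)$ to $u_{\chi_s}(\g)$, which is routine since no step of its proof refers to the value of $\chi$ on $\mathfrak h$.
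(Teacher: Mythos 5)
Your proof is correct and follows essentially the same route as the paper: both reorder $EF$ via Lemma~3.2, use Lemma~3.5 to move $E_I, F_I$ past $F_u$, and then apply Rudakov's formula for $\g$ and for $\g_I$. The one small refinement is that you pass through the surjection $\pi\colon Z^{\chi_s}(\lambda)\twoheadrightarrow Z^{\chi_s}_I(\lambda)$ to identify $E_uF_u\otimes v_\lambda$ with $R^I_{\g}(\lambda)\otimes v_\lambda$, whereas the paper makes that identification directly inside $Z^{\chi_s}(\lambda)$, implicitly invoking the PBW computation from Lemma~3.6 (which produces the same scalar in either module because $v_\lambda$ is annihilated by $\mathfrak n^+$ in both).
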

\begin{proof} Put the elements in $\Phi^+$ in the order  $\a_1,\dots, \a_t$ such  that $\a_{t-s+1}, \dots, \a_t$ are  positive roots  of $\g_I$ in the order of ascending heights, so that $$\Phi^+\setminus \Phi_I^+=\{\a_1,\dots,\a_{t-s}\}\ (\mbox{denoted $\{\beta_1,\dots,\beta_k\}$ earlier}).$$ By Lemma 3.2 and  analogous conclusions for $u(\mathfrak u), \ u(\mathfrak n^+)$, and $u(\mathfrak n^-)$, there is a nonzero $c\in\mathbf F$ such that $$cR_{\g}(\lambda)\otimes v_{\lambda}=e_{\a_1}^{p-1}\cdots e_{\a_{t-s}}^{p-1}e_{\a_{t-s+1}}^{p-1}\cdots e_{\a_t}^{p-1}f_{\a_1}^{p-1}\cdots f_{\a_{t-s}}^{p-1}f_{\a_{t-s+1}}^{p-1}\cdots f_{\a_t}^{p-1}\otimes v_{\lambda}.$$ By Lemma 3.5,  each $e_{\a_i}$, $t-s<i\leq t$, commutes with $f_{\a_1}^{p-1}\cdots f_{\a_{t-s}}^{p-1}$, so we get $$\begin{aligned} cR_{\g}(\lambda)\otimes v_{\lambda}&=e_{\a_1}^{p-1}\cdots e_{\a_{t-s}}^{p-1}f_{\a_1}^{p-1}\cdots f_{\a_{t-s}}^{p-1}(e_{\a_{t-s+1}}^{p-1}\cdots e_{\a_t}^{p-1}f_{\a_{t-s+1}}^{p-1}\cdots f_{\a_t}^{p-1}\otimes v_{\l})\\&=e_{\a_1}^{p-1}\cdots e_{\a_{t-s}}^{p-1}f_{\a_1}^{p-1}\cdots f_{\a_{t-s}}^{p-1} R_{\g_I}(\lambda)\otimes v_{\lambda}\\&=R_{\g_I}(\lambda)R^I_{\g}(\lambda)\otimes v_{\lambda}.\end{aligned}$$ This completes the proof.
\end{proof}
 To prove the next theorem, we need to apply (H3). Let $( , )$ be the nondegenerate bilinear form on $\g$. Define the mapping $\theta: \g\longrightarrow \g^*$ by $\theta (x)= (-, x)$ for all $x\in \g$. Let us note that $\g$ (resp. $\g^*$) is naturally a $G$-module with the adjoint (resp. coadjoint) action. Then the $G$-invariance of $( , )$ implies that $\theta$ is an  isomorphism of $G$-modules, so that $\theta$ is also an isomorphism of $\g$-modules by  \cite[7.11(3)]{j1}. Here $\g$ is a (left) $\g$-module with the $\g$-action given by $$ad x(y)=[x,y]\quad \mbox{for} \quad x,y\in \g,$$ whereas the $\g$-action on $\g^*$, by \cite[7.11(8)]{j1}, is defined by $$(x\cdot f)(y)=f(ad(-x)(y))\quad \mbox{for}\quad x,y\in\g,\ f\in\g^*.$$ Since $\theta$ is a $\g$-module isomorphism, it follows that $$\begin{aligned} (-, [x, y])&=\theta (adx (y))\\&=(-x)\cdot\theta (y)\\(\text{using the definition of $\g$-action on $\g^*$})&=([-,x], y)\end{aligned}$$ for all $x,y\in\g$; that is,  $( , )$ is also $\g$-invariant.\par
According to \cite[6.6]{j}, the bilinear form on $\g$ is also non-degenerate on $\mathfrak h$.
   For each $\lambda\in \mathfrak h^*$, let $t_{\lambda}\in \mathfrak h$ be  such that \ $\lambda (h)=(h, t_{\lambda}) \quad \text{for all}\quad h\in \mathfrak h.$
 Define the bilinear form $( , )$  on $\mathfrak h^*$ by $$(\lambda, \mu)=(t_{\lambda}, t_{\mu}),\quad \lambda,\mu\in\mathfrak h^*.$$ \begin{lemma} Let $W$ be the Weyl group of $G$ and let $w\in W$. Then $$(w\lambda, w\mu)=(\lambda, \mu)\quad \text{for}\quad  \lambda,\mu\in \mathfrak h^*.$$
 \end{lemma}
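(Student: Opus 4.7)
The plan is to deduce the $W$-invariance on $\mathfrak{h}^*$ from the already-established $G$-invariance of $(\,,\,)$ on $\mathfrak{g}$ via the identification $\mathfrak{h}^* \cong \mathfrak{h}$ given by $\lambda \mapsto t_\lambda$. The key observation is that the Weyl group acts on $\mathfrak{h}$ through representatives $n_w \in N_G(T)$, and the $G$-invariance of $(\,,\,)$ on $\mathfrak{g}$ immediately forces $(n_w h_1, n_w h_2) = (h_1, h_2)$ for all $h_1, h_2 \in \mathfrak{h}$, which descends to a $W$-invariance statement $(w h_1, w h_2) = (h_1, h_2)$ on $\mathfrak{h}$ (since $T$ acts trivially on $\mathfrak{h}$ via Ad).

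Next I would establish the compatibility $t_{w\lambda} = w \cdot t_\lambda$ for each $w \in W$ and $\lambda \in \mathfrak{h}^*$. For any $h \in \mathfrak{h}$, using the definition of $t_\lambda$ together with the $W$-invariance on $\mathfrak{h}$ just proved,
\begin{equation*}
(w\lambda)(h) \;=\; \lambda(w^{-1}h) \;=\; (w^{-1}h,\, t_\lambda) \;=\; (h,\, w \cdot t_\lambda),
\end{equation*}
and since $t_{w\lambda}$ is uniquely determined by the property $(w\lambda)(h) = (h, t_{w\lambda})$ for all $h$ (nondegeneracy of the form on $\mathfrak{h}$, as quoted from \cite[6.6]{j}), we conclude $t_{w\lambda} = w \cdot t_\lambda$.

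Combining the two steps, for any $\lambda, \mu \in \mathfrak{h}^*$,
\begin{equation*}
(w\lambda,\, w\mu) \;=\; (t_{w\lambda},\, t_{w\mu}) \;=\; (w \cdot t_\lambda,\, w \cdot t_\mu) \;=\; (t_\lambda,\, t_\mu) \;=\; (\lambda,\, \mu),
\end{equation*}
which is the claim.

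The only delicate point is the first step: making sure that the passage from ``$(\,,\,)$ is $G$-invariant on $\mathfrak{g}$'' to ``$(\,,\,)$ is $W$-invariant on $\mathfrak{h}$'' is legitimate in the given setting. This should be routine since $W \cong N_G(T)/T$ acts on $\mathfrak{h}$ by restriction of the adjoint action and $T$ preserves $\mathfrak{h}$ pointwise under Ad, so the action factors through $W$ and inherits invariance of the form. Everything else is a direct computation via the identification $\lambda \leftrightarrow t_\lambda$.
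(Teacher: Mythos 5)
Your proof is correct and takes essentially the same route as the paper: the heart of both arguments is establishing $t_{w\lambda}=w\cdot t_\lambda$ from the $G$-invariance of the bilinear form, after which the identity follows by unwinding the definition of $(\,,\,)$ on $\mathfrak h^*$. The only cosmetic difference is that you isolate the restriction-to-$\mathfrak h$ invariance as a separate preliminary step, whereas the paper folds that invariance directly into the computation $(h,g^{-1}t_{g\lambda})=(gh,t_{g\lambda})$.
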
 \begin{proof} Let $g\in N_G(T)$ represent $w$. Then since $$(h, g^{-1}t_{g\lambda})=(gh, t_{g\lambda})=\lambda (h)=(h, t_{\lambda})$$ for all $h\in \mathfrak h$, so that $g^{-1}t_{g\lambda}=t_{\lambda}$, it follows that, for $\lambda, \mu\in \mathfrak h^*$,  $$\begin{aligned}(g\lambda, g\mu)&=(t_{g\lambda}, t_{g\mu})\\&=(g\mu)(t_{g\lambda})\\&=\mu(g^{-1}t_{g\lambda})\\&=\mu (t_{\lambda})\\&=(\lambda,\mu).\end{aligned}$$\end{proof}
  Keep the ordering of the elements of $\Phi^+$ as in the proof of Lemma 4.2. Then we have the following theorem.
  \begin{theorem} $$R^I_{\g}(\lambda)=c\Pi^{t-s}_{i=1}[(\lambda+\rho)(h_{\a_i})^{p-1}-1]$$ for some nonzero $c\in\mathbf F$.
  \end{theorem}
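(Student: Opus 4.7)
The plan is to derive the formula directly from Lemma 4.2 by matching factors. With the ordering $\a_1,\dots,\a_t$ of $\Phi^+$ chosen as in the proof of Lemma 4.2 (so that $\a_{t-s+1},\dots,\a_t$ are the positive roots of $\g_I$ and $\{\a_1,\dots,\a_{t-s}\}=\Phi^+\setminus \Phi^+_I$), I would first split
$$R_{\g}(\l)=(-1)^{t}\prod_{i=1}^{t-s}[(\l+\rho)(h_{\a_i})^{p-1}-1]\cdot\prod_{i=t-s+1}^{t}[(\l+\rho)(h_{\a_i})^{p-1}-1]$$
and compare the second product against $R_{\g_I}(\l)=(-1)^{s}\prod_{i=t-s+1}^{t}[(\l+\rho_I)(h_{\a_i})^{p-1}-1]$. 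If these agree up to the overall sign $(-1)^s$, Lemma 4.2 becomes $R^I_{\g}(\l)R_{\g_I}(\l)=c(-1)^{t-s}R_{\g_I}(\l)\prod_{i=1}^{t-s}[(\l+\rho)(h_{\a_i})^{p-1}-1]$. Each factor of $R_{\g_I}(\l)$ is a nonzero polynomial in $x_1,\dots,x_l$ (its linear part $\l(h_\a)$ is nontrivial since $h_\a\neq 0$ in $\mathfrak h$), so $R_{\g_I}(\l)$ may be cancelled in the integral domain $\mathbf F[x_1,\dots,x_l]$ to produce the stated formula.

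Hence the crux is to show $\rho(h_\a)=\rho_I(h_\a)$ in $\mathbf F$ for every $\a\in\Phi^+_I$, equivalently $\rho^I(h_\a)=0$, where $\rho^I:=\rho-\rho_I=\frac{1}{2}\sum_{\beta\in\Phi^+\setminus \Phi^+_I}\beta$. I would invoke the standard fact that $W_I$ permutes $\Phi^+\setminus \Phi^+_I$ (these being the weights of the $W_I$-stable nilradical $\mathfrak u$ of $\mathfrak p_I$). For any $\a\in\Phi^+_I$ the reflection $s_\a\in W_I$ then permutes $\Phi^+\setminus \Phi^+_I$, and the direct computation
$$s_\a(\beta)(h_\a)=\beta(h_\a)-\beta(h_\a)\a(h_\a)=-\beta(h_\a)$$
shows that contributions cancel in each $s_\a$-orbit of size two, while any $s_\a$-fixed $\beta$ automatically satisfies $\beta(h_\a)=0$. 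Thus $\sum_{\beta\in\Phi^+\setminus\Phi^+_I}\beta(h_\a)=0$ already in $\bbz$, and hence $\rho^I(h_\a)=0$ in $\mathbf F$ for any $p$.

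I expect the identity $\rho^I(h_\a)=0$ to be the main obstacle: the factor $\frac12$ in the definition of $\rho^I$ forces one to establish the underlying \emph{integer} identity $\sum\beta(h_\a)=0$ before reducing mod $p$, rather than simply appealing to $W_I$-invariance of $\rho^I$ on the nose (which would be problematic in characteristic $2$), and it is precisely this identity that aligns the $\Phi^+_I$-factors of $R_\g(\l)$ with those of $R_{\g_I}(\l)$. (Lemma 4.3 is available as a back-up to phrase the vanishing via the $W$-invariant form on $\mathfrak h^*$, but the pairing argument is independent of it.) Once the integer identity is established, the remaining polynomial cancellation in $\mathbf F[x_1,\dots,x_l]$ is routine.
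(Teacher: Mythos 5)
Your proposal is correct, and it reaches the key identity $\rho(h_\a)=\rho_I(h_\a)$ for $\a\in\Phi^+_I$ by a genuinely different route. The outer scaffolding is the same as the paper's: split $R_\g(\l)$ into the block of $\Phi^+\setminus\Phi^+_I$-factors and the block of $\Phi^+_I$-factors, show the latter coincides with $R_{\g_I}(\l)$, and cancel $R_{\g_I}(\l)$ (a nonzero polynomial) in the integral domain $\mathbf F[x_1,\dots,x_l]$ via Lemma 4.2. But where the paper relies on (H3) --- introducing $t_\a$ with $h_\a=c_\a t_\a$, transferring the computation to the $W$-invariant form via Lemma 4.3, and using $s_\a$-antisymmetry of $(\a,\rho-\rho_I)$ to obtain $2(\rho-\rho_I)(h_\a)=0$ in $\mathbf F$, first for $\a\in I$ and then extending to $\Phi^+_I$ by the $\bbz$-linearity of $h_\a$ --- you run a purely root-combinatorial argument over $\bbz$: since $s_\a$ permutes $\Phi^+\setminus\Phi^+_I$ for any $\a\in\Phi^+_I$ (the nilradical $\mathfrak u$ is $W_I$-stable), the relation $s_\a(\beta)(h_\a)=-\beta(h_\a)$ pairs off the terms, $s_\a$-fixed $\beta$ contribute $0$, and $\sum_{\beta\in\Phi^+\setminus\Phi^+_I}\beta(h_\a)=0$ holds already in $\bbz$ before any reduction mod $p$. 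What your version buys: no dependence on (H3) or Lemma 4.3 for this step, no passage through a cancel-by-$2$ step inside $\mathbf F$ (harmless under the paper's standing hypotheses but your route is manifestly characteristic-free), and a uniform treatment of all $\a\in\Phi^+_I$ rather than treating $\a\in I$ first and extending by linearity. What the paper's version buys is mainly stylistic: it stays inside the invariant-form toolkit that Section 4 has already built up. Either way the cancellation in $\mathbf F[x_1,\dots,x_l]$ then closes the argument.
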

  \begin{proof} From above we have $$R_{\g}(\lambda)=(-1)^t\Pi^t_{i=1}[(\lambda+\rho)(h_{\a_i})^{p-1}-1]$$ and $$R_{\g_I}(\lambda)=(-1)^{s}\Pi^t_{i=t-s+1}[(\lambda+\rho_I)(h_{\a_i})^{p-1}-1].$$ Since $R_{\g}(\lambda$), $R^I_{\g}(\lambda)$,  and $R_{\g_I}(\lambda)$ are all elements in the polynomial algebra $\mathbf F[x_1,\dots,x_l]$,
  which contains no zero divisors, by the cancelation law and Lemma 4.2 it suffices to show that $\rho(h_{\a})=\rho_I(h_{\a})$ for all $\a\in\Phi^+_I=\{\a_{t-s+1},\dots, \a_t\}$.\par  For every $\a\in\Phi^+_I$, applying the argument for the proof \cite[Prop. 8.3(c)]{hu} we have, for all $h\in\mathfrak h$,
   $$\begin{aligned}(h, h_{\a})&=(h, [e_{\a}, f_{\a}])\\&=([h, e_{\a}], f_{\a})\\&=\a(h)(e_{\a}, f_{\a})\\&=(h, t_{\a})(e_{\a}, f_{\a})\\&=(h, (e_{\a}, f_{\a})t_{\a}),\end{aligned}$$ so that
   $h_{\a}=c_{\a}t_{\a}$, in which $c_{\a}=:(e_{\a}, f_{\a})$ is nonzero since the bilinear form is nondegenerate.  \par   If $\a\in I$, then we have  $$\begin{aligned}(\rho-\rho_I)(h_{\a})&=c_{\a}(\rho-\rho_I) (t_{\a})\\&=c_{\a}(t_{\a}, t_{\rho-\rho_I})\\&=c_{\a}(\a, \rho-\rho_I)\\(\mathbin{\mathrm{using \ Lemma \ 4.3}})&=c_{\a}(s_{\a}(\a), s_{\a}(\rho-\rho_I))\\&=
  c_{\a}(-\a, \rho-\rho_I)\\&=-(\rho-\rho_I)(h_{\a}),\end{aligned}$$ implying  that $\rho(h_{\a})=\rho_I(h_{\a})$.  For every  $\a\in\Phi^+_I$, by the property of the Chevalley basis mentioned before,  $h_{\a}$ is a $\mathbb Z$-linear combination of $h_{\a_i}, \a_i\in I$, so we have $\rho(h_{\a})=\rho_I(h_{\a})$.   This completes the proof.
  \end{proof}
  As an application of Theorem 4.4, we give a new proof of the Kac-Weisfeiler theorem (cf. \cite[Th. 8.5]{fp1}).
  \begin{theorem} Let $\g=\mbox{Lie}(G)$ be a restricted Lie algebra of classical type. Keep the assumptions from the introduction. Assume that $\chi(h_{\a})\neq 0$ for all $\a\in\Phi^+\setminus \Phi^+_I$. Then the induced module
  $Z^{\chi}_I(\lambda)$ is simple.
  \end{theorem}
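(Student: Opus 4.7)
The plan is to combine Theorem~3.7 and Theorem~4.4 with the restricted structure on the Cartan subalgebra. By Theorem~3.7, it suffices to show $R^I_{\g}(\lambda)\neq 0$; by Theorem~4.4, writing $\Phi^+\setminus \Phi^+_I=\{\a_1,\dots,\a_{t-s}\}$, we have
$$R^I_{\g}(\lambda)=c\,\prod^{t-s}_{i=1}\!\bigl[(\lambda+\rho)(h_{\a_i})^{p-1}-1\bigr],\qquad c\in \mathbf F\setminus 0,$$
so the task reduces to showing that $(\lambda+\rho)(h_{\a})^{p-1}\neq 1$ in $\mathbf F$ for each $\a\in\Phi^+\setminus \Phi^+_I$. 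Since the roots of $x^{p-1}-1\in\mathbf F[x]$ are exactly the elements of $\mathbf F_p^\times$, this is equivalent to $(\lambda+\rho)(h_{\a})\notin \mathbf F_p$.

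The next step is to extract, from the restricted enveloping algebra structure, the relation between $\lambda(h_\a)$ and $\chi(h_\a)$. Because $v_\lambda\in L^{\chi}_I(\lambda)$ is a weight vector of weight $\lambda$, for any $h\in\mathfrak h$ with $h^{[p]}=h$ the central identity $h^p-h^{[p]}=\chi(h)^p$ in $u_\chi(\mathfrak h)$ gives
$$\lambda(h)^p-\lambda(h)=\chi(h)^p.$$
For a Chevalley basis in a simply connected group (our hypothesis (H1)), each distinguished element $h_{\a_i}$ ($\a_i\in\Pi$) satisfies $h_{\a_i}^{[p]}=h_{\a_i}$, and since $\mathfrak h$ is abelian and every $h_\a$ is a $\mathbb Z$-linear combination of the $h_{\a_i}$, the $p$-semilinearity of the $p$-map on an abelian restricted Lie algebra (together with $k^p=k$ for $k\in\mathbf F_p$) yields $h_{\a}^{[p]}=h_{\a}$ for every $\a\in\Phi^+$.

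Now I conclude as follows. Fix $\a\in\Phi^+\setminus \Phi^+_I$. By hypothesis $\chi(h_\a)\neq 0$, so $\chi(h_\a)^p\neq 0$; hence $\lambda(h_\a)^p-\lambda(h_\a)\neq 0$, forcing $\lambda(h_\a)\notin \mathbf F_p$. On the other hand $\rho(h_\a)=\langle \rho,\a^\vee\rangle\in\mathbb Z\subseteq \mathbf F_p$, so $(\lambda+\rho)(h_\a)=\lambda(h_\a)+\rho(h_\a)\notin \mathbf F_p$, and in particular $(\lambda+\rho)(h_\a)^{p-1}\neq 1$. This holds for every $\a\in\Phi^+\setminus \Phi^+_I$, so every factor of $R^I_{\g}(\lambda)$ is nonzero, giving $R^I_{\g}(\lambda)\neq 0$; Theorem~3.7 then yields the simplicity of $Z^\chi_I(\lambda)$.

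There is no real obstacle here once Theorems~3.7 and~4.4 are in hand: the entire argument is a transfer from the hypothesis on $\chi$ to non-vanishing of Rudakov-type factors, via the restricted relation $h_\a^p-h_\a=\chi(h_\a)^p$. The only points requiring mild care are the verifications that $h_\a^{[p]}=h_\a$ (using (H1) and the classical-type assumption to have a well-behaved Chevalley basis) and that $\rho(h_\a)\in\mathbf F_p$, both of which are standard.
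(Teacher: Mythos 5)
Your proof is correct and follows the same route as the paper: reduce to showing $R^I_{\g}(\lambda)\neq 0$ via Theorems~3.7 and~4.4, then observe that $\chi(h_\a)\neq 0$ forces each Rudakov factor $(\lambda+\rho)(h_\a)^{p-1}-1$ to be nonzero. The only difference is that the paper asserts this last implication in a single sentence while you spell out the underlying reason — the relation $\lambda(h_\a)^p-\lambda(h_\a)=\chi(h_\a)^p$ coming from $h_\a^{[p]}=h_\a$ in $u_\chi(\g)$, plus $\rho(h_\a)\in\mathbf F_p$ — which is exactly the standard fact the paper is implicitly invoking.
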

  \begin{proof} Recall from the proof of Lemma 4.2 that $\Phi^+\setminus \Phi^+_I=\{\a_1,\dots, \a_{t-s}\}$. Since $\chi(h_{\a})\neq 0$ for all $\a\in\Phi^+\setminus \Phi^+_I$,  we have $$(\lambda+\rho)(h_{\a_i})^{p-1}-1\neq 0\quad \mathbin{\mathrm{for}}\quad i=1,\dots, t-s,$$ so that $R^I_{\g}(\lambda)\neq 0$. Thus, $Z^{\chi}_I(\lambda)$ is simple.

  \end{proof}

Acknowledgement: The author thanks an anonymous expert for pointing out the simple proof for Prop. 2.1. The author  thanks also Randall Holmes and the referee   for useful suggestions.

\def\refname{\centerline{\bf REFERENCES}}

\end{document}